\theoremstyle{plain}
\newtheorem{theorem}{Theorem}[section]
\newtheorem{corollary}[theorem]{Corollary}
\newtheorem{lemma}[theorem]{Lemma}
\newtheorem{proposition}[theorem]{Proposition}
\theoremstyle{definition}
\newtheorem{definition}[theorem]{Definition}
\theoremstyle{remark}
\newtheorem*{remark}{Remark}
\newtheorem*{remarks}{Remarks}
\numberwithin{equation}{section}
\title[Limit transition between hypergeometric functions]
{Limit transition between hypergeometric functions of type BC and type A}
\author{Margit R\"osler} 
\address{Institut f\"ur Mathematik, Universit\"at Paderborn, Warburger Str. 100, D-33098 Paderborn, Germany}
\email{roesler@math.upb.de}
\author{Tom Koornwinder} 
\address{Korteweg de Vries Institute, University of Amsterdam, P.O. Box 94248, 1090 CE Amsterdam, The Netherlands}
\email{T.H.Koornwinder@uva.nl}
\author{Michael Voit}
\address{Fakult\"at Mathematik, Technische Universit\"at Dortmund,
          Vogelpothsweg 87,
          D-44221 Dortmund, Germany}
\email{michael.voit@math.tu-dortmund.de}
\subjclass[2010]{Primary 33C67; Secondary 33C52, 43A90, 33C80, 53C35 }
\keywords{Hypergeometric functions associated with root systems, limit
  transitions, Heckman-Opdam theory, spherical functions, Grassmann manifolds,
  Olshanski spherical
pairs}
\begin{document}

\begin{abstract}
Let $F_{BC}(\lambda,k;t)$ be the Heckman-Opdam hypergeometric
function of type BC with multiplicities $k=(k_1,k_2,k_3)$ and weighted
half sum
$\rho(k)$ of positive roots. We prove that  
$F_{BC}(\lambda+\rho(k),k;t)$ converges for $k_1+k_2\to\infty$ and
$k_1/k_2\to \infty$ to a
function of type A for $t\in\mathbb R^n$ and $\lambda\in\mathbb
C^n$. This limit is obtained from
a corresponding result for Jacobi polynomials of type BC, which is
proven for a slightly more general limit
behavior of the multiplicities, using an explicit representation of
Jacobi polynomials in terms of Jack polynomials.

Our limits include limit transitions for the spherical functions of
non-compact
Grassmann manifolds over one of the fields $\mathbb F= \mathbb R,
\mathbb C, \mathbb H$ when the rank is fixed
and the dimension tends to infinity. The limit functions turn out to be 
 exactly the spherical functions of the corresponding infinite dimensional
Grassmann manifold in the sense of Olshanski.
\end{abstract}

\maketitle

\section{Introduction}
Consider the  Heckman-Opdam hypergeometric
functions $F_{R}(\lambda,k;t)$  for the root systems 
$R=BC_n=\{\pm e_i, \pm 2e_{i}, \pm e_i \pm e_j, \, 1\leq i <j\leq n\}\,$
and $A_{n-1}=\{ \pm(e_i-e_j): 1\leq i<j\leq n\}\,$ with
multiplicities $k=(k_1,k_2,k_3)$ and $k=\kappa$ respectively as studied
e.g. in \cite{BO}, \cite{H1},
\cite{H2}, \cite{H3}, \cite{HS}, \cite{O1}, \cite{O2}.
Fix a positive subsystem $R_+$ in each case and denote by 
$\rho_{R}(k) = \frac{1}{2}\sum_{\alpha\in R_+} k_\alpha \alpha$ the
weighted half-sum of positive roots.
The Jacobi polynomials of type $BC_n$ are indexed by the cone of
dominant weights
\[ P_+ = \{(\lambda_1, \ldots \lambda_n) \in \mathbb Z_+^n: \lambda_1
\geq \ldots \geq \lambda_n\}\]
and can be written as
\[ P_\lambda^{BC}(k;t) = \frac{1}{c(\lambda+\rho_{BC}(k),k)}
F_{BC}(\lambda+\rho_{BC}(k), k;t) \]
where $c$ is the generalized $c$-function. 
The Jacobi polynomials of type $A_{n-1}$  are indexed
by the set $\pi(P_+)$, where $\pi$ denotes the orthogonal projection
of $\mathbb R^n$ onto
$\mathbb R^n_0 $. They can be written as monic Jack polynomials, 
\[ P_{\pi(\lambda)}^A(\kappa;t) = j_\lambda ^\kappa (e^t), \quad t\in
\mathbb R_0^n;\]
see Section 4 for the precise notation.

In this paper, we shall prove the following limit  for the
Jacobi polynomials of type $BC_n$:
\begin{equation}\label{limit-intro2} \lim_{\substack{ k_1+k_2 \to
\,\infty\\ k_1/k_2\,\to \,a}}
P_\lambda^{BC}(k;t) \,=\, 4^{|\lambda|}\cdot
j_\lambda^{k_3}(x(t))\end{equation}
for $a\in [0,\infty],$ with the transform
\[ \mathbb R^n\to \mathbb R_+^n, \quad t\mapsto x(t)
\quad\text{with}\quad
x_i(t)= \gamma_a+ {\rm sinh}^2\bigl(\frac{t_i}{2}\bigr), \quad
\gamma_a = \frac{a+1}{a+2}.\]
This result was already stated in \cite{K2} without proof.
A proof different from the one in the present paper was given by
R.~J. Beerends and the second author in an unpublished manuscript.

Restricting to  the  case $a=\infty$
we shall next extend the limit with respect to the spectral variable
$\lambda$ and prove that
\begin{align}\label{limit-intro1} \lim_{\substack{ k_1+k_2 \to \,\infty\\
                                k_1/k_2\,\to \,\infty}}
&F_{BC}(\lambda+\rho_{BC}(k),k;t) \\
= &\, \prod_{i=1}^n \bigl({\rm cosh}^2 \frac{t_i}{2}\bigr)^{\sum_{i=1}^n \lambda_i/n}\cdot 
 F_A\bigl(\pi(\lambda) + \rho_A(k_3), k_3; \pi\bigl(\log {\rm cosh}^2\frac{t}{2}\bigr)\bigr)
\notag\end{align}
for all  $t\in\mathbb R^n, $ locally uniformly in $\lambda\in\mathbb C^n$.

Let us briefly discuss the above limits for the rank one case $n=1$
where $k_3$ does not appear, the
functions $F_{BC}(\lambda,k;t)$ are essentially Jacobi functions
\[
\varphi_\lambda^{(\alpha,\beta)}(t)=
{}_2F_1(\tfrac12(\alpha+\beta+i\lambda),\tfrac12(\alpha+\beta+1-i\lambda);
\alpha+1;-\sinh^2t),
\]
for which we refer to \cite{K1}, and where $F_A$ reduces to the
constant function $1$.
More precisely, comparing the examples on p. 89 of \cite{O1} and
\cite{K1}, we have
$$F_{BC_1}(\lambda,k;t)=
\varphi_{-2i\lambda}^{(\alpha,\beta)}\bigl(\frac{t}{2}\bigr)
\quad\text{with}\quad  \alpha=k_1+k_2-1/2, \> \beta=k_2-1/2,$$
and (\ref{limit-intro1}) becomes
the limit
$$\lim_{\alpha\to\infty, \alpha/\beta\to\infty}
\varphi_{\lambda+i(\alpha+\beta+1)}^{(\alpha,\beta)}(t) 
= ({\rm cosh}\, t)^{-i\lambda} \quad(\lambda\in\mathbb C).$$
This limit is easily seen from
\begin{multline*}
\varphi_{\lambda+i(\alpha+\beta+1)}^{(\alpha,\beta)}(t)
={}_2F_1(\tfrac12i\lambda,\alpha+\beta+1-\tfrac12 i\lambda;
\alpha+1;-\sinh^2t)\\
=(\cosh t)^{-i\lambda}
{}_2F_1(\tfrac12 i\lambda,-\beta+\tfrac12 i\lambda;\alpha+1;\tanh^2t).
\end{multline*}
Moreover,  the Heckman-Opdam polynomials in rank one are related to 
the monic Jacobi polynomials $p_n^{(\alpha,\beta)}$ by
\[ P_n^{BC}(k;it) = 2^n p_n^{(\alpha,\beta)}(\cos t), \,\, n\in \mathbb Z_+.\]
Limit \eqref{limit-intro2} means that for $c = a+1\, \in [1, \infty)$
and $x=\cos t\in [-1,1]$,
$$\lim_{\alpha\to\infty, \alpha/\beta\to c}p_n^{(\alpha,\beta)}(x)
= \left(x+\frac{c-1}{c+1}\right)^n. $$
This limit is easily seen from
\[
p_n^{(\alpha,\beta)}(x)
=\frac{2^n(\alpha+1)_n}{(n+\alpha+\beta+1)_n}\,
\sum_{l=0}^n\frac{(n+\alpha+\beta+1)_l}{(\alpha+1)_l}\binom nl
\left(\frac{x-1}2\right)^l.
\]

We shall obtain \eqref{limit-intro2} by means of an explicit
representation of
$P_\lambda^{BC}$ in terms of Jack polynomials which goes back to ideas
of \cite{SK} and to \cite{Ha}.
The limit
\eqref{limit-intro1} for the hypergeometric function
is then  obtained from (\ref{limit-intro2}) 
by  Phragm\'{en}-Lindel\"of principles and
sharp explicit estimates for
general hypergeometric
functions which slightly improve estimates by Opdam \cite{O1} and
Schapira \cite{S}.
Our  limit transition (\ref{limit-intro1}) includes
a  limit result  for the spherical functions of the Grassmannians
$SO_0(p,n)/SO(p)\times SO(n),$   $SU(p,n)/S(U(p)\times U(n))$ 
and $Sp(p,n)/Sp(p)\times Sp(n), $ where $Sp(p,n)$ 
denotes the pseudo-unitary group of index $(p,n)$ over $\mathbb H$. As
$p\to\infty$
(and the rank $n$ is fixed),
the spherical functions of these Grassmannians converge to
(restrictions of) the spherical functions of the
reductive symmetric space $GL_+(n,\mathbb R)/SO(n), GL(n,\mathbb C)/U(n)$ and
$GL(n,\mathbb H)/Sp(n),$ respectively. We shall also show that the
obtained limits are exactly the spherical functions
of the corresponding infinite dimensional Grassmannians in the sense
of Olshanski.
Our results for infinite dimensional Grassmannians are also of
interest in comparison with
the recent results of \cite{DOW}. There 
it is shown that under natural conditions on an infinite dimensional
symmetric space
$\displaystyle G_\infty/K_\infty= \lim_{\rightarrow} G_n/K_n$ where
$G_n/K_n$ are Riemannian
symmetric of compact type, spherical functions of $G_n/K_n$ can have a
limit which is $K_\infty$-spherical only
if the $G_n/K_n$ are Grassmannians.

This paper is organized as follows: In Section 2 we recapitulate some
basic notions and facts on
the Cherednik kernel and Heckman-Opdam hypergeometric functions.
We need the Cherednik kernel because we improve in
Section 3 estimates of Opdam \cite{O1} and
Schapira \cite{S} for this function. This results in an estimate for the
Heckman-Opdam hypergeometric functions which is
uniform in the multiplicity parameters.
The Cherednik kernel will not be further used in the main part of the paper,
starting in Section 4, where
the limit (\ref{limit-intro2}) for Jacobi polynomials of type BC is proved. 
This result, the estimates of Section 3, and Phragm\'{en}-Lindel\"of
principles are combined in Section 5, leading to the limit
(\ref{limit-intro1}).
In Section 6 we briefly discuss this limit in terms of spherical
functions for non-compact Grassmann manifolds of growing dimension and
fixed rank.
Finally, in Section 7 the Olshanski spherical functions of the
associated infinite dimensional Grassmannians are characterized.


\section{Notation and Preliminaries}

Let $\mathfrak a$ be a finite-dimensional Euclidean space with inner
product $\langle\,.\,,.\,\rangle$
which is extended to a complex bilinear form on the complexification
$\mathfrak a_\mathbb C$ of $\mathfrak a$.
We identify $\mathfrak a$ with its dual space
$\mathfrak a^* =\text{Hom}(\mathfrak a, \mathbb R)$ via the
given inner product.  Let $R\subset \mathfrak a$ be a (not necessarily reduced)
crystallographic
root system and let  $W$ be the Weyl group of $R$. For $\alpha\in R$ we write
$\alpha^\vee = 2\alpha/\langle\alpha,\alpha\rangle$ and denote by
$\sigma_\alpha(t) =  t - \langle t,\alpha^\vee\rangle \alpha\,$ the
orthogonal reflection
in the hyperplane perpendicular to $\alpha$.
We 
denote by $\mathcal K$ the  vector space of multiplicity functions
$k=(k_\alpha)_{\alpha\in R}$, satisfying $k_\alpha = k_\beta$ if $\alpha$ and
$\beta$ are in the same $W$-orbit. We shall write $k\geq 0$ ($k>0$) if
$k_\alpha\geq 0\,$ ($k_\alpha >0$) for all $\alpha\in R$.
 For $k\in \mathcal  K$ let 
\begin{equation}\label{general-halfsum}
\rho= \rho(k) := \frac{1}{2}\sum_{\alpha\in R_+} k_\alpha\alpha
\end{equation}
be the weighted half-sum of positive roots, where $R_+$ is some fixed positive
  subsystem of $R$. Let 
\[ \mathfrak a_+ := \{t\in \mathfrak a: \langle t, \alpha\rangle > 0 \,\,\forall \alpha\in R_+\}\]
be the positive Weyl chamber associated with $R_+.$ 
If $k\geq 0$, then $\rho(k)\in \overline{\mathfrak a_+},$ and if $k>0$, then  $\rho(k) \in \mathfrak a_+\,.$ This follows from the fact that for a simple system
$\{\alpha_i\}\subset R_+$ (with indivisible roots $\alpha_i$), the reflection $\sigma_{\alpha_i}$ leaves $R_+\setminus\{\alpha_i\}$ invariant, and hence
\[\langle \rho(k), \alpha_i^\vee\rangle = k_{\alpha_i} + 2k_{2\alpha_i}\]
(with the understanding that $k_{2\alpha_i} = 0\,$ if $2\alpha_i\notin R$),
c.f. \cite{M},
 Section 11. 

\noindent
For fixed $k\in \mathcal K,$ 
the Cherednik operator in direction $\xi\in \mathfrak a$ is  defined by
\[ T_\xi = T_\xi(k) :=\partial_\xi + \sum_{\alpha\in R_+} k_\alpha\langle\alpha,\xi\rangle \frac{1}{1-e^{-\alpha}}(1-\sigma_\alpha) - \langle\rho(k),\xi\rangle\]
where $\partial_\xi$ is the usual directional derivative  and  
\[e^\lambda(t) := e^{\langle\lambda,t\rangle}\quad \forall \lambda, t \in \mathfrak a_{\mathbb C}.\]
For fixed  $k$, 
the operators $\{T_\xi(k),\, \xi\in \mathfrak a\}$ commute. According to Theorem 3.15 of \cite{O1},
there exist a $W$-invariant tubular neighborhood $U$ 
of $\mathfrak a$ in $\mathfrak a_{\mathbb C}$ and a unique holomorphic function $G$ on 
 $\mathfrak a_{\mathbb C}\times K^{reg}\times U$ which satisfies 
\begin{align}\label{Cherednik_char}
{\rm (i) }\, & \,   \forall \,\xi\in \mathfrak a, \, \lambda\in \mathfrak a_{\mathbb C}: \,\, T_\xi(k)G(\lambda, k;\,.\,) = \langle\lambda,\xi\rangle G(\lambda, k;\,.\, );\notag \\
{\rm (ii) }\,&\,  G(\lambda, k; 0) = 1.   \end{align}
The function $G$ is called the Cherednik-Opdam kernel. We shall mainly be concerned with the hypergeometric function associated 
with $R$, which is given by
\[ F(\lambda,k;t) := \frac{1}{|W|} \sum_{w\in W} G(\lambda,k;w^{-1}t).\]
It is actually $W$-invariant
both in $\lambda$ and $t.$ The functions
$F(\lambda,k;\,.\,)$ generalize the spherical functions of Riemannian
symmetric spaces of the non-compact type, which occur for specific values of
the multiplicity parameter $k\ge 0.$ 

In order to interpret the main results below in the  geometric context, we shall use the following scaling property:

\begin{lemma}\label{scaling}
Let $R$ be a root system in a Euclidean space $\mathfrak a$ 
with multiplicity function  $k$.  For a constant $c>0$ consider the rescaled  root system
$ \widetilde R:= cR:= \{c\alpha, \alpha \in R\}$ and define $\widetilde k\,$ on $\widetilde R$ by 
$\,\widetilde k_{c\alpha} := k_\alpha\,.$
Then the associated Cherednik kernels are related via
\[G_\lambda(\widetilde k;t) \,= \, G_{\lambda/c}(k;ct).\]
A corresponding result holds also for the associated hypergeometric functions.
\end{lemma}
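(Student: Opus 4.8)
The plan is to exploit the uniqueness part of Opdam's characterization \eqref{Cherednik_char}: I will produce the candidate $H(t):=G_{\lambda/c}(k;ct)$ and verify that, viewed as a function of $t$, it is the holomorphic solution of the defining system for the rescaled data $(\widetilde R,\widetilde k)$ with spectral parameter $\lambda$. Since $H$ is the composition of the holomorphic kernel $G$ with the linear dilation $t\mapsto ct$, it is holomorphic on a neighborhood of $\mathfrak a$ in $\mathfrak a_{\mathbb C}$, and the normalization $H(0)=G_{\lambda/c}(k;0)=1$ is immediate. Everything therefore reduces to checking the eigenvalue equations (i) for the Cherednik operators $\widetilde T_\xi(\widetilde k)$ attached to $\widetilde R$, after which uniqueness closes the argument.

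First I would record how the geometric data transform under $\widetilde R=cR$. For $\widetilde\alpha=c\alpha$ one has $\widetilde\alpha^\vee=c^{-1}\alpha^\vee$, whence $\sigma_{\widetilde\alpha}=\sigma_\alpha$; in particular the Weyl groups coincide, $\widetilde W=W$, and $\widetilde R_+=cR_+$. Moreover $\langle\widetilde\alpha,\xi\rangle=c\langle\alpha,\xi\rangle$, and by the definition $\widetilde k_{c\alpha}=k_\alpha$ the half-sum scales as $\rho(\widetilde k)=\tfrac12\sum_{\alpha\in R_+}k_\alpha\,c\alpha=c\,\rho(k)$, so that $\langle\rho(\widetilde k),\xi\rangle=c\langle\rho(k),\xi\rangle$.

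The key step is to transport the Cherednik operator through the dilation. Writing $H(t)=g(ct)$ with $g=G_{\lambda/c}(k;\,.\,)$, the chain rule gives $\partial_\xi H(t)=c\,(\partial_\xi g)(ct)$; the reflection term satisfies $\bigl((1-\sigma_{\widetilde\alpha})H\bigr)(t)=\bigl((1-\sigma_\alpha)g\bigr)(ct)$ because $\sigma_\alpha$ is linear and commutes with the dilation; and the rational coefficient obeys $\bigl(1-e^{-\widetilde\alpha}(t)\bigr)^{-1}=\bigl(1-e^{-\alpha}(ct)\bigr)^{-1}$. Assembling these identities together with the factors of $c$ arising from $\langle\widetilde\alpha,\xi\rangle$ and $\langle\rho(\widetilde k),\xi\rangle$ yields the clean intertwining relation $\widetilde T_\xi(\widetilde k)H(t)=c\,(T_\xi(k)g)(ct)$. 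I expect the only genuine work to be this bookkeeping, namely making sure every factor of $c$ lands exactly where the displayed identity predicts.

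Finally, property (i) for $g=G_{\lambda/c}(k;\,.\,)$ gives $T_\xi(k)g=\langle\lambda/c,\xi\rangle g=c^{-1}\langle\lambda,\xi\rangle g$, and inserting this into the intertwining relation cancels the factor $c$ to produce $\widetilde T_\xi(\widetilde k)H=\langle\lambda,\xi\rangle H$. Thus $H$ solves (i)--(ii) for $(\widetilde R,\widetilde k,\lambda)$, and uniqueness forces $G_\lambda(\widetilde k;t)=G_{\lambda/c}(k;ct)$. For the hypergeometric function I would then average over $W=\widetilde W$, using that $c\,w^{-1}t=w^{-1}(ct)$ since $w^{-1}$ is linear:
\[ F(\lambda,\widetilde k;t)=\frac1{|W|}\sum_{w\in W}G_\lambda(\widetilde k;w^{-1}t)=\frac1{|W|}\sum_{w\in W}G_{\lambda/c}(k;w^{-1}(ct))=F(\lambda/c,k;ct), \]
which is the asserted scaling relation for $F$.
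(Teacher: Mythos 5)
Your proof is correct and follows essentially the same route as the paper: the paper's proof is a two-line version of your argument, recording the intertwining relation $(T_{\xi}(\widetilde k)\widetilde f\,)(t) = (T_{c\xi}(k)f)(ct)$ (equivalent to your $\widetilde T_\xi(\widetilde k)H(t)=c\,(T_\xi(k)g)(ct)$, since $T_{c\xi}(k)=c\,T_\xi(k)$) and then invoking the uniqueness in the characterization \eqref{Cherednik_char}. Your write-up merely makes explicit the bookkeeping ($\sigma_{c\alpha}=\sigma_\alpha$, $\rho(\widetilde k)=c\rho(k)$, $e^{-c\alpha}(t)=e^{-\alpha}(ct)$) and the Weyl-group averaging step that the paper leaves implicit.
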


\begin{proof}
 Write  $\widetilde f(t) = f(ct)$ for functions $f$ on $\mathfrak a.$
Then 
\[ (T_{\xi}(\widetilde k) \widetilde f\,)(t) = (T_{c\xi}(k)f)(ct).\]
In  view of  characterization \eqref{Cherednik_char}, this implies the assertion.\end{proof}

\smallskip
In this paper, we shall always assume that $k\geq 0$ 
and we often write 
\[G(\lambda,k;t) = G_\lambda(k;t), \,\,\,F(\lambda,k;t) = F_\lambda(k;t).\]

\medskip

For certain spectral variables $\lambda$, the hypergeometric functions $F_\lambda$ are actually exponential polynomials, called Heckman-Opdam Jacobi polynomials.
To introduce these, let 
$P=\{ \lambda\in \mathfrak a: \langle\lambda,\alpha^\vee\rangle \in \mathbb Z \,\,\forall\,\alpha\in R\}\,$ denote the 
weight lattice of  $R$ and 
$\, P_+ =\{ \lambda\in P: \langle\lambda,\alpha^\vee\rangle \geq 0 \,\,\forall\,\alpha\in R_+\}\,$  the set of dominant weights associated with~$R_+$. We equip $P_+$ with the 
usual dominance order, that is, $\mu <\lambda$ iff $\lambda-\mu$ is a sum of positive roots. 
Let 
\[\mathcal T:=\text{span}_{\mathbb C}\{e^{\lambda}, \,\lambda\in P\}\]
denote the space of exponential polynomials associated with $R$.
The monomial symmetric functions
\begin{equation}\label{monomial} M_\lambda = \sum_{\mu\in W\lambda} e^\mu, \quad \lambda\in P_+\end{equation}
form a basis of the subspace $\mathcal T^W$ of $W$-invariant elements from $\mathcal T.$ 

%
\begin{definition}\label{JacobiPolDef}
The  Jacobi polynomials $\{P_\lambda(k), \,\,\lambda\in P_+\}$ associated with
$R$ are uniquely characterized by
the following two conditions:
\begin{enumerate}
\item[\rm{(i)}] $\displaystyle P_\lambda(k) = M_\lambda + \sum_{\mu< \lambda}
c_{\lambda\mu}(k)M_\mu\,\quad (c_{\lambda\mu}(k)\in \mathbb C);$
\\
\item[\rm{(ii)}] $\displaystyle L_k P_\lambda(k) = \langle \lambda, \lambda
+2\rho(k)\rangle P_\lambda(k)$,\quad
where
\begin{equation}\label{Laplace} L_k = \Delta_{\mathfrak a} + \sum_{\alpha\in R_+}
k_\alpha \,\text{coth}\frac{\langle \alpha, t\rangle}{2}\, \partial_\alpha\,.
\end{equation}
\end{enumerate}
\end{definition}
\noindent
Note that \eqref{Laplace} just gives the $W$-invariant part of the
Heckman-Opdam Laplacian, which is
given by restriction to $W$-invariant functions of
\[ \sum_{i=1}^n T_{\xi_i}(k)^2 -|\rho(k)|^2, \]
with an arbitrary  orthonormal basis $\{\xi_1, \ldots, \xi_n\}$  of $\mathfrak a$. The operator $L_k$ generalizes the radial part of the Laplace-Beltrami operator on a Riemannian symmetric space of the non-compact type.

Let us point out that in the definition of the Jacobi polynomials, condition (ii) is frequently replaced by an orthogonality condition. 
As remarked in Proposition 8.1 of \cite{H1}, both sets of conditions are equivalent. 
Note also that in \cite{H1}, the Jacobi polynomials are indexed by $-P_+$ instead of $P_+$, which leads to a different sign in~(ii). 

According to equation (4.4.10) of \cite{HS}, the $P_\lambda(k)$ 
can be expressed in terms of the hypergeometric function via
\begin{equation}\label{Tom1}
F_{\lambda +\rho}(k; t) = \,c(\lambda+\rho,k) P_\lambda(k;t),
\end{equation}
where $c(\lambda,k)$ is the generalized $c$-function 
as defined in \cite{HS}, Definition 3.4.2.
As the polynomial $P_0(k)$ is a constant, it follows that
\begin{equation}\label{constF}
F_\rho(k;t) = 1. 
\end{equation}


\section{Some estimates for $G$ and $F$}

The growth behavior and asymptotic properties of the Cherednik kernel $G$ and the hypergeometric function $F$ have been studied in detail in \cite{O1} as well as  in \cite{S}, where the precise asymptotic behavior in the space variable was determined. 
We recall the
following  results: 

\begin{lemma}\label{F_estimates1}(\cite{O1}) Let $k\geq 0.$ Then for all $\lambda\in \mathfrak a_{\mathbb C}$ and all $t\in \mathfrak a, $
\begin{equation}\label{Opdamestim} |G_\lambda(k;t)| \,\leq \,\sqrt{|W|}\cdot e^{\text{max}_{w\in W}\text{Re}\langle w\lambda,t\rangle}.\end{equation}
\end{lemma}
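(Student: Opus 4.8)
The plan is to estimate not $G_\lambda$ directly but the $W$-symmetrized square
\[ \Psi(t) := \sum_{w\in W}\bigl|G_\lambda(k;wt)\bigr|^2, \]
and to read off the lemma from the trivial observation that $\bigl|G_\lambda(k;t)\bigr|^2$ is one of its nonnegative summands, so that $\bigl|G_\lambda(k;t)\bigr|^2\le\Psi(t)$; this single step is what produces the factor $\sqrt{|W|}$. Writing $\mu=\text{Re}\,\lambda$ and $m(t):=\max_{w\in W}\text{Re}\langle w\lambda,t\rangle=\max_{w\in W}\langle w\mu,t\rangle$, which is $W$-invariant in $t$ and, on the closed chamber $\overline{\mathfrak a_+}$, equals $\langle\mu^+,t\rangle$ for the dominant representative $\mu^+$ of $\mu$, it then suffices to prove
\[ \Psi(t)\le |W|\,e^{2m(t)}\qquad(t\in\mathfrak a). \]
Since $\Psi$ and $m$ are both $W$-invariant, I would restrict to $t\in\overline{\mathfrak a_+}$, where the majorant is the smooth exponential $e^{2\langle\mu^+,t\rangle}$.

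To control $\Psi$ I would use that $G_\lambda$ is a joint eigenfunction of the commuting Cherednik operators, $T_\xi(k)G_\lambda=\langle\lambda,\xi\rangle G_\lambda$, whence $\bigl(\sum_i T_{\xi_i}(k)^2\bigr)G_\lambda=\langle\lambda,\lambda\rangle G_\lambda$ for an orthonormal basis $\{\xi_i\}$ of $\mathfrak a$. Since the coefficients of the $T_\xi(k)$ are real for real $t$ and $k\ge0$, uniqueness in \eqref{Cherednik_char} gives $\overline{G_\lambda(k;t)}=G_{\bar\lambda}(k;t)$, so $\overline{G_\lambda}$ is again such an eigenfunction. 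As $\Psi$ is $W$-invariant, I would rewrite the symmetric operator $\sum_i T_{\xi_i}(k)^2$ in divergence form with respect to the weight
\[ \delta_k(t)=\prod_{\alpha\in R_+}\Bigl|2\sinh\tfrac{\langle\alpha,t\rangle}{2}\Bigr|^{2k_\alpha}, \]
and integrate by parts: this should yield a Bochner-type identity for $L_k\Psi$ in which a nonnegative gradient term $2\sum_{w}|\nabla G_\lambda(k;w\,\cdot\,)|^2$ appears alongside the eigenvalue contribution and the reflection remainders, the latter carrying a favorable sign precisely because $k\ge0$. Taking real parts, the gradient term absorbs the imaginary part of the complex eigenvalue $\langle\lambda,\lambda\rangle$ and leaves the geometric growth rate $m$. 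One is thereby led to a differential inequality exhibiting $\Phi:=e^{-2m}\Psi$ as a subsolution of a second-order elliptic operator on $\mathfrak a_+$ with no interior maximum exceeding its boundary data. Evaluating at the origin gives $\Psi(0)=|W|$ by condition~(ii) in \eqref{Cherednik_char}, i.e.\ $\Phi(0)=|W|$, while the Harish-Chandra type exponential asymptotics of $G_\lambda$ keep $\Phi$ bounded by $|W|$ towards the walls and at infinity; the maximum principle then yields $\Phi\le|W|$, which is the desired estimate.

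The genuine obstacle is the passage from the Cherednik eigenvalue equation to a pointwise differential inequality for $\Psi$, for $\sum_i T_{\xi_i}(k)^2$ is not a differential operator: the reflection parts $(1-\sigma_\alpha)$ couple the values of $G_\lambda$ at $t$ and at $\sigma_\alpha t$, and these nonlocal terms must be shown to have a definite sign. This is exactly where $k\ge0$ is indispensable, and where the $\coth$ singularities of $L_k$ along the walls of $\overline{\mathfrak a_+}$ demand care, since the subsolution inequality has to hold uniformly up to the boundary. A secondary subtlety is that for complex $\lambda$ the eigenvalue $\langle\lambda,\lambda\rangle$ is not real, so that the positivity of the Bochner gradient term, rather than any monotonicity of the eigenvalue, is what licenses replacing $\langle\lambda,\lambda\rangle$ by the real quantity $m(t)$ governing the exponential growth. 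I expect the verification of this sign-definite inequality, valid up to the walls, to be the heart of the matter; granting it, the reduction through $\Psi$ and the maximum principle are routine.
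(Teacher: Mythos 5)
The paper does not prove this lemma itself -- it quotes it from Opdam \cite{O1} -- but your opening move is exactly how Opdam's proof begins: he too works with $\Psi(t)=\sum_{w\in W}|G_\lambda(k;wt)|^2$, gets the factor $\sqrt{|W|}$ from discarding all but one summand, and anchors at $\Psi(0)=|W|$ via $G(\lambda,k;0)=1$. Where you then pivot to a second-order Bochner/maximum-principle scheme, the argument has two genuine gaps. First, the maximum-principle closure is circular: on the unbounded chamber the principle only bounds interior values of $\Phi=e^{-2m}\Psi$ by its supremum at the boundary and at infinity, so you must know \emph{a priori} that $\Phi\leq|W|$ there -- and you supply this by appealing to ``Harish-Chandra type exponential asymptotics'' of $G_\lambda$, which is precisely the kind of growth estimate the lemma is meant to establish (indeed Schapira's sharp asymptotics are proved \emph{using} Opdam's bound, not before it). Relatedly, your evaluation $\Phi(0)=|W|$ is a value at a single interior point (the origin lies on every wall) and cannot anchor a global upper bound; the inference runs in the wrong direction. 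Second, the sign-definite inequality for the nonlocal reflection terms, which you correctly identify as the heart of the matter, is left entirely unproved, and at second order there is no visible mechanism for it: the couplings between $t$ and $\sigma_\alpha t$ do not combine into a signed expression after applying $\sum_i T_{\xi_i}(k)^2$ to $|G_\lambda|^2$, since $|G_\lambda|^2$ is not an eigenfunction and the cross terms proliferate.

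The actual proof closes at \emph{first} order, where the $W$-symmetrization you introduced does all the work, and no elliptic theory or control at infinity is needed. Differentiate $s\mapsto\Psi(st)$ radially, substituting $\partial_\xi G_\lambda=\langle\lambda+\rho,\xi\rangle G_\lambda-\sum_{\alpha\in R_+}k_\alpha\langle\alpha,\xi\rangle(1-e^{-\alpha})^{-1}(G_\lambda-G_\lambda\circ\sigma_\alpha)$. For the pair $\{w,\sigma_\alpha w\}$, writing $x=\langle\alpha,wt\rangle$, $a=G_\lambda(k;wst)$, $b=G_\lambda(k;\sigma_\alpha wst)$, the two reflection contributions combine -- via $(1-e^{-y})^{-1}+(1-e^{y})^{-1}=1$ -- into
\[
-\,k_\alpha x\bigl(|a|^2-|b|^2\bigr)\;-\;k_\alpha\, x\coth\bigl(\tfrac{sx}{2}\bigr)\,|a-b|^2 .
\]
Summed over $w$ and $\alpha$, the first part exactly cancels the $\rho$-contribution (since $\sum_{\alpha\in R_+}k_\alpha\langle\alpha,wt\rangle=2\langle\rho,wt\rangle$), while the second is $\leq 0$ precisely because $k\geq 0$ and $x\coth(sx/2)\geq 0$ -- this is where the hypothesis enters, transparently and without any wall singularities, since $|a-b|^2$ vanishes where $\coth$ blows up. One is left with $\frac{d}{ds}\Psi(st)\leq 2\max_{w\in W}\mathrm{Re}\langle w\lambda,t\rangle\,\Psi(st)$, and Gronwall from $\Psi(0)=|W|$ gives $\Psi(t)\leq|W|\,e^{2\max_w\mathrm{Re}\langle w\lambda,t\rangle}$, hence the lemma. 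So your reduction through $\Psi$ is the right idea, but the second-order detour replaces a two-line Gronwall argument with an unproved sign inequality plus a circular boundary condition.
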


\begin{lemma}\label{F_estimates2}(\cite{S}) Let $k\geq 0.$ Then \parskip=-1pt
 \begin{enumerate}\itemsep=-1pt
\item[\rm{(1)}] For $\lambda\in \mathfrak a,$ the kernel $G_\lambda(k;\,.\,) $ is real and strictly positive on $\mathfrak a$.
  \item[\rm{(2)}] $\displaystyle |G_\lambda(k;t)| \,\leq \,G_{\text{Re}\lambda}(k;t)\,$ for all $\lambda\in \mathfrak a_{\mathbb C}$ and $t\in \mathfrak a.$ 
 \end{enumerate}
\end{lemma}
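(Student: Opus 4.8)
The plan is to deduce both assertions from a single positive integral representation of the Cherednik kernel in its spectral variable,
\[
G_\lambda(k;t) = \int_{\mathfrak a} e^{\langle\lambda,s\rangle}\, d\mu_t^k(s),
\]
where, for each fixed $t\in\mathfrak a$ and $k\ge 0$, $\mu_t^k$ is a \emph{probability} measure supported in the convex hull $\mathrm{conv}(Wt)$. Granting such a representation, part (1) is immediate for real $\lambda$: the integrand $e^{\langle\lambda,s\rangle}$ is real and strictly positive, so $G_\lambda(k;t)>0$. Part (2) then follows from the triangle inequality, since for $\lambda=\mu+i\nu$ with $\mu,\nu\in\mathfrak a$,
\[
|G_\lambda(k;t)| = \Bigl|\int_{\mathfrak a} e^{i\langle\nu,s\rangle}\, e^{\langle\mu,s\rangle}\, d\mu_t^k(s)\Bigr| \le \int_{\mathfrak a} e^{\langle\mu,s\rangle}\, d\mu_t^k(s) = G_{\mu}(k;t) = G_{\mathrm{Re}\,\lambda}(k;t).
\]
(The reality of the kernel needed to even state positivity is the easy step: for $\lambda\in\mathfrak a$ and $k\ge0$ all coefficients of $T_\xi(k)$ and the eigenvalues $\langle\lambda,\xi\rangle$ are real, so $\overline{G_\lambda(k;\,\cdot\,)}$ solves the same system \eqref{Cherednik_char} with the same value $1$ at the origin, and the uniqueness part of Opdam's theorem forces $\overline{G_\lambda(k;\,\cdot\,)}=G_\lambda(k;\,\cdot\,)$.)

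I would produce the representing object $\mu_t^k$ from the exponential growth already recorded in \eqref{Opdamestim}. For fixed $t$ the function $\lambda\mapsto G_\lambda(k;t)$ is entire and satisfies $|G_\lambda(k;t)|\le\sqrt{|W|}\,e^{\max_{w\in W}\mathrm{Re}\langle w\lambda,t\rangle}$, i.e. it is of exponential type with indicator the support function of $\mathrm{conv}(Wt)$. By the Paley--Wiener theorem it is therefore the Fourier--Laplace transform of a compactly supported distribution $\mu_t^k$ with $\mathrm{supp}\,\mu_t^k\subseteq\mathrm{conv}(Wt)$, and evaluating at $\lambda=0$ together with condition (ii) in \eqref{Cherednik_char} gives the normalization $\mu_t^k(\mathfrak a)=1$.

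The genuine obstacle is the \emph{positivity} of $\mu_t^k$, equivalently the nonvanishing of $G_\lambda(k;t)$ for real $\lambda$. The difference (reflection) terms in $T_\xi(k)$ block any naive ``first zero'' argument: at a hypothetical zero $t_0$ of $G_\mu(k;\,\cdot\,)$ the eigenvalue equation only yields
\[
\partial_\xi G_\mu(k;t_0) = -\sum_{\alpha\in R_+} k_\alpha\langle\alpha,\xi\rangle\,\bigl(1-e^{-\langle\alpha,t_0\rangle}\bigr)^{-1}\, G_\mu(k;\sigma_\alpha t_0),
\]
whose right-hand side involves the values at the reflected points $\sigma_\alpha t_0$, so the derivative need not vanish and positivity is a truly global phenomenon. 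I would establish it through the positivity of the heat kernel of the Cherednik--Dunkl Laplacian $\sum_i T_{\xi_i}(k)^2$, which generates a semigroup with a strictly positive integral kernel by a parabolic maximum principle; identifying $\mu_t^k$ with a marginal of this kernel forces $\mu_t^k\ge 0$, and hence $\mu_t^k>0$ by the normalization. This heat-kernel positivity is exactly where the analytic work concentrates, while the passage from it to both claims of the lemma is the routine bookkeeping carried out above.
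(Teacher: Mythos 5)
Your reduction of both parts to a positive Laplace-type representation $G_\lambda(k;t)=\int e^{\langle\lambda,s\rangle}\,d\mu_t^k(s)$ is sound \emph{in principle}, and two of your steps are fine: the reality of $G_\lambda(k;\cdot)$ for real $\lambda$ via the uniqueness statement in \eqref{Cherednik_char}, and the Paley--Wiener--Schwartz production of a compactly supported distribution $\mu_t^k$ with support in ${\rm conv}(Wt)$ from \eqref{Opdamestim}. But note first a concrete error: evaluating the representation at $\lambda=0$ gives $\mu_t^k(\mathfrak a)=G_0(k;t)$, not $1$; condition (ii) of \eqref{Cherednik_char} normalizes at $t=0$ in the \emph{space} variable and says nothing here. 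Indeed $G_0(k;\cdot)\not\equiv 1$, since the constant function satisfies $T_\xi(k)1=-\langle\rho(k),\xi\rangle$, and the paper itself quotes Schapira's asymptotics $G_0(k;t)\asymp\prod_{\alpha}(1+\langle\alpha,t\rangle)e^{-\langle\rho,t_+\rangle}$. So $\mu_t^k$ is not a probability measure; this is harmless for the lemma (positivity alone suffices) but signals that the bookkeeping was not checked.

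The genuine gap is the positivity of $\mu_t^k$, which you correctly identify as the crux and then dispose of with an unsupported claim. Two objections. First, even granting a strictly positive heat kernel $p_\tau$ for $\sum_i T_{\xi_i}(k)^2$, there is no identification of $\mu_t^k$ with a ``marginal'' of it: the heat kernel decomposes spectrally as an integral of $G_{i\nu}(k;s)\overline{G_{i\nu}(k;t)}$ against the Plancherel measure over \emph{imaginary} spectral parameters $\nu\in\mathfrak a$, and positivity of that integral says nothing about the sign of the distribution representing $\lambda\mapsto G_\lambda(k;t)$; by Bochner, $\mu_t^k\geq 0$ is equivalent to positive definiteness of $\nu\mapsto G_{i\nu}(k;t)$, which nothing in your argument addresses. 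Second, positivity of $\mu_t^k$ is the trigonometric analogue of R\"osler's positivity of the Dunkl intertwining operator --- a statement strictly \emph{stronger} than the lemma and a deep theorem in its own right, not available as a black box; and the ``parabolic maximum principle'' for the non-symmetric Cherednik Laplacian acting on non-$W$-invariant functions (where the reflection difference terms do not cancel) is precisely the kind of global argument you yourself observed is blocked for the first-order system, so deferring to it proves nothing. For comparison: the paper gives no proof at all but imports the lemma from Schapira \cite{S}, who establishes (1) and (2) directly from the first-order Cherednik system by real-analytic, maximum-principle-type arguments that handle the reflected points $\sigma_\alpha t_0$ through the Weyl-group geometry --- exactly the global phenomenon your proposal names but does not resolve. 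As written, both parts of the lemma remain unproven.
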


By symmetrization over the Weyl group, one obtains the same properties and estimates  for the hypergeometric function $F$.

\smallskip
In \cite{S},  Opdam's estimate \eqref{Opdamestim} was substantially improved. In fact, it is shown there that for all $\lambda\in \mathfrak a $ and all $t\in \mathfrak a,$ 
\begin{equation}\label{Schapira_Estimate} G_\lambda(k;t) \leq G_0(k;t)\cdot e^{\text{max}_{w\in W}\langle w\lambda,t\rangle}\end{equation}
and that for fixed $k>0$, the kernel $G_0$ has the asymptotic behavior
\[ G_0(k;t) \,\asymp\,   \prod_{\alpha\in R_+^0\vert \langle \alpha,t\rangle \geq 0} \bigl(1+\langle\alpha,t\rangle\bigr) e^{-\langle\rho,t_+\rangle}\]
where $R_+^0$ denotes the set of indivisible positive roots and $\, t_+$ is  the unique element from the orbit $Wt$ which is contained 
in $ \overline{\mathfrak a_+}.$

\bigskip
\noindent
The following result generalizes Schapira's estimate \eqref{Schapira_Estimate}.

\begin{theorem}\label{hyperestimate} Let $k\geq 0. $ Then for all 
$\lambda\in \mathfrak a,$ all $\mu \in \overline{\mathfrak a_+}$ and all $\,t\in \mathfrak a,$
\[ G_{\lambda + \mu}(k;t)\leq G_\mu(k,t)\cdot e^{\max_{w\in W}\langle w\lambda,  t\rangle}.\]
The same estimate holds for the hypergeometric function $F$ instead of $G.$
\end{theorem}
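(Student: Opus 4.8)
The plan is to deduce the statement for $F$ from the one for $G$, and to obtain the one for $G$ by rewriting the quotient $G_{\lambda+\mu}/G_\mu$ as an eigenfunction of a ``dressed'' Cherednik-type system and then running a comparison/maximum principle against the support function $m(t):=\max_{w\in W}\langle w\lambda,t\rangle$. First I observe that the $F$-estimate is a formal consequence of the $G$-estimate. Indeed, writing $F_{\lambda+\mu}(k;t)=\frac1{|W|}\sum_{w\in W}G_{\lambda+\mu}(k;w^{-1}t)$ and applying the (assumed) bound for $G$ in each term gives the factor $e^{\max_{w'}\langle w'\lambda,\,w^{-1}t\rangle}$; since $t\mapsto\max_{w'}\langle w'\lambda,t\rangle$ is $W$-invariant, this factor equals $e^{m(t)}$ uniformly in $w$, and the remaining sum reassembles $F_\mu(k;t)$. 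Hence it suffices to prove the estimate for $G$.

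For the $G$-estimate, fix $k\ge 0$, $\mu\in\overline{\mathfrak a_+}$ and $\lambda\in\mathfrak a$. Since $\mu$ is real, $G_\mu(k;\,\cdot\,)$ is real and strictly positive by Lemma~\ref{F_estimates2}(1), so $Q:=G_{\lambda+\mu}(k;\,\cdot\,)/G_\mu(k;\,\cdot\,)$ is a smooth positive function with $Q(0)=1$. Starting from the eigenvalue equations $T_\xi G_{\lambda+\mu}=\langle\lambda+\mu,\xi\rangle G_{\lambda+\mu}$ and $T_\xi G_\mu=\langle\mu,\xi\rangle G_\mu$ and applying the quotient rule, the $\rho$-shifts cancel and one finds the identity
\[
\partial_\xi Q(t)=\langle\lambda,\xi\rangle\,Q(t)-\sum_{\alpha\in R_+}\widetilde k_\alpha(t)\,\frac{\langle\alpha,\xi\rangle}{1-e^{-\langle\alpha,t\rangle}}\bigl(Q(t)-Q(\sigma_\alpha t)\bigr),\qquad \widetilde k_\alpha(t):=k_\alpha\,\frac{G_\mu(k;\sigma_\alpha t)}{G_\mu(k;t)}>0.
\]
Thus $Q$ solves a Cherednik-type system with spatially varying but \emph{strictly positive} coefficients $\widetilde k_\alpha$, spectral parameter $\lambda$, and no $\rho$-shift. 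Passing to $u:=\log Q$ (so $u(0)=0$), the goal becomes $u\le m$ on $\mathfrak a$.

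To prove $u\le m$ I would argue by comparison, exploiting that $m$ is convex, $W$-invariant, positively homogeneous of degree one (so $\langle\nabla m,t\rangle=m$ by Euler), and satisfies $m\ge\langle\lambda,\,\cdot\,\rangle$. At a would-be interior maximum $t_0$ of $u-m$ (a smoothness point of $m$) one has $\nabla u(t_0)=\nabla m(t_0)$; the $W$-invariance of $m$ forces $u(\sigma_\alpha t_0)\le u(t_0)$, hence $1-e^{u(\sigma_\alpha t_0)-u(t_0)}\ge 0$. Taking the inner product of the gradient identity with $t_0$, using $\langle\nabla m(t_0),t_0\rangle=m(t_0)\ge\langle\lambda,t_0\rangle$ together with $\widetilde k_\alpha>0$ and $x/(1-e^{-x})>0$, pins down all difference terms to zero. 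The \textbf{main obstacle} is precisely that this naive maximum principle is borderline: being a first-order (transport-type) reflection system rather than an elliptic one, the argument yields only an equality case and must be sharpened. I would close it by comparing against $m+\varepsilon\phi$ for a strictly convex $W$-invariant regularizer $\phi$, using Opdam's a priori bound in Lemma~\ref{F_estimates1} to control growth at infinity, deriving a strict contradiction at the maximum, and letting $\varepsilon\to 0$; the walls of the chamber are harmless because the combination $(1-\sigma_\alpha)/(1-e^{-\alpha})$ is regular there. A conceptually cleaner route, which makes the whole estimate transparent and specializes to Schapira's estimate \eqref{Schapira_Estimate} at $\mu=0$, is to invoke a positive Laplace-type representation $G_\lambda(k;t)=\int_{\mathrm{conv}(Wt)}e^{\langle\lambda,\xi\rangle}\,d\nu_{t,k}(\xi)$ with $\nu_{t,k}\ge 0$ independent of the spectral variable: tilting the measure by $e^{\langle\mu,\,\cdot\,\rangle}$ replaces $G_0$ by $G_\mu$ and $\lambda$ by $\lambda+\mu$, and bounding $e^{\langle\lambda,\xi\rangle}$ by $e^{\max_{\xi\in\mathrm{conv}(Wt)}\langle\lambda,\xi\rangle}=e^{m(t)}$ yields the claim at once. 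The obstacle in that route is justifying the positivity and the support of $\nu_{t,k}$, which is not available from the tools recalled above and would have to be imported.
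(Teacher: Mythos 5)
Your reduction of the $F$-estimate to the $G$-estimate is fine (the factor $e^{\max_{w'}\langle w'\lambda,\,\cdot\,\rangle}$ is $W$-invariant and all terms are positive), and your quotient identity is correct: dividing the eigenvalue equations one checks that the $\rho$-shifts cancel and that $Q=G_{\lambda+\mu}/G_\mu$ satisfies the dressed system with $\widetilde k_\alpha(t)=k_\alpha\,G_\mu(k;\sigma_\alpha t)/G_\mu(k;t)>0$. The genuine gap sits exactly where you place your ``main obstacle'', and your proposed fix does not close it: for the comparison against $m+\varepsilon\phi$ to produce anything, the maximum of $u-m-\varepsilon\phi$ (with $u=\log Q$) must be \emph{attained}, which requires an a priori bound $u(t)=o(\phi(t))$ as $|t|\to\infty$, i.e.\ an upper bound on $-\log G_\mu(k;t)$. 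Lemma \ref{F_estimates1} cannot deliver this: Opdam's estimate bounds $G_{\lambda+\mu}$ from \emph{above} but says nothing about $G_\mu$ from \emph{below}, so ``using Opdam's a priori bound to control growth at infinity'' fails for the quotient. The only lower bounds available (Schapira's two-sided asymptotics, quoted around \eqref{Schapira_Estimate}) hold for fixed $k>0$, not for the boundary cases $k\geq 0$ covered by the theorem; one could try to import them for $k>0$ and pass to $k\geq0$ by continuity of $G$ in the multiplicity, but that is extra machinery absent from your argument. (Two smaller points: at a maximum on a wall of the normal fan of $\mathrm{conv}(W\lambda)$ you must replace $\nabla m(t_0)$ by a subgradient $v\in\partial m(t_0)$, which still satisfies $\langle v,t_0\rangle=m(t_0)$ because $m$ is a support function, so this is repairable; and your second route via a positive Laplace-type representation with $\mathrm{supp}\,\nu_{t,k}\subseteq\mathrm{conv}(Wt)$ would indeed give the theorem in one line, but, as you concede yourself, the positivity and support of $\nu_{t,k}$ for the Cherednik kernel are not available from the recalled results, so that route is conditional, not a proof.)

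It is instructive to compare with the paper's proof, which needs none of this because it runs the analysis in the \emph{spectral} variable rather than the space variable. Fix $t$ (and $w\in W$) and consider the holomorphic function $f(\lambda)=e^{-\langle\lambda,t_+\rangle}\,G_{w\lambda+\mu}(k;t)/G_\mu(k;t)$ on the closed tube $\overline S=\{\lambda:\ \mathrm{Re}\,\lambda\in\overline{\mathfrak a_+}\}$. Lemma \ref{F_estimates2}(2) gives $|f(\lambda)|\leq f(\mathrm{Re}\,\lambda)$, hence $|f|\leq 1$ on the imaginary slice, while Lemma \ref{F_estimates1} gives a bound uniform in $\lambda\in\overline S$ --- here $G_\mu(k;t)$ is a single fixed positive number, since $t$ is frozen, so no lower bound on $G_\mu$ as a function of $t$ is ever needed. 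Writing $\lambda=\sum_i z_i\lambda_i$ in fundamental-weight coordinates and applying the one-variable Phragm\'en--Lindel\"of theorem (Lemma \ref{Phragmen}) inductively in $z_1,\dots,z_n$ yields $|f|\leq1$ on $\overline S$, which is the claimed inequality for $\lambda\in\overline{\mathfrak a_+}$; general real $\lambda$ then follows by applying the Weyl group. This complex-analytic scheme uses exactly the two lemmas recalled in Section 3 and nothing more, which is precisely the ingredient economy your real-variable maximum principle is missing.
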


For $\mu=\rho\in  \overline{\mathfrak a_+}$ we obtain, in view of identity \eqref{constF} and of Lemma \ref{F_estimates2}, the following  

\begin{corollary}\label{Cor_F} Let $k\geq 0. $ Then for all $\,\lambda \in \mathfrak a_{\mathbb C}$ and all $ t\in \mathfrak a,$
\begin{equation}\label{estimF_1}
\big|F_{\lambda+\rho}(k;t)\big|\,\leq \,e^{\max_{w\in W}
{\rm Re}\langle w\lambda,t\rangle} . \end{equation} 
\end{corollary}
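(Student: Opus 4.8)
The plan is to derive the corollary directly from Theorem~\ref{hyperestimate}, the normalization \eqref{constF}, and Lemma~\ref{F_estimates2}, since all the substantive work has already been done. The starting observation is that because $k\geq 0$, the weighted half-sum $\rho=\rho(k)$ lies in $\overline{\mathfrak a_+}$, as recorded in the preliminaries; hence $\mu=\rho$ is an admissible choice in Theorem~\ref{hyperestimate}.

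First I would dispose of the complex spectral variable, reducing to a real one. Writing $\lambda\in\mathfrak a_{\mathbb C}$ and using that $\rho$ is real, we have $\text{Re}(\lambda+\rho)=\text{Re}\lambda+\rho$, so the version of Lemma~\ref{F_estimates2}(2) for $F$ (obtained by symmetrization over $W$, as noted just after that lemma) gives
\[
|F_{\lambda+\rho}(k;t)| \leq F_{\text{Re}\lambda+\rho}(k;t)
\]
for all $t\in\mathfrak a$. This leaves a purely real spectral parameter $\text{Re}\lambda\in\mathfrak a$.

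Next I would apply Theorem~\ref{hyperestimate} for $F$ with the real spectral variable $\text{Re}\lambda$ and with $\mu=\rho\in\overline{\mathfrak a_+}$, which yields
\[
F_{\text{Re}\lambda+\rho}(k;t)\leq F_\rho(k;t)\cdot e^{\max_{w\in W}\langle w\,\text{Re}\lambda,\,t\rangle}.
\]
Invoking $F_\rho(k;t)=1$ from \eqref{constF} collapses the right-hand side to the exponential factor alone. Finally I would identify this exponent with the claimed one: since $t\in\mathfrak a$ is real and the pairing is the complex bilinear extension of the inner product, $\langle w\lambda,t\rangle=\langle w\,\text{Re}\lambda,t\rangle+i\langle w\,\text{Im}\lambda,t\rangle$, whence $\text{Re}\langle w\lambda,t\rangle=\langle w\,\text{Re}\lambda,t\rangle$ for each $w\in W$, and therefore $\max_{w\in W}\langle w\,\text{Re}\lambda,t\rangle=\max_{w\in W}\text{Re}\langle w\lambda,t\rangle$. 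Chaining the three inequalities gives \eqref{estimF_1}.

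There is no real obstacle here: the corollary is just a specialization of Theorem~\ref{hyperestimate} at the weight $\mu=\rho$, repackaged for complex $\lambda$. The only two points that need a moment's care are the legitimacy of the choice $\mu=\rho\in\overline{\mathfrak a_+}$, which depends on the hypothesis $k\geq 0$, and the reduction from complex to real spectral parameters via Lemma~\ref{F_estimates2}(2) together with the elementary identity $\text{Re}\langle w\lambda,t\rangle=\langle w\,\text{Re}\lambda,t\rangle$, which uses the reality of $t$.
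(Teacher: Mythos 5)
Your proposal is correct and follows essentially the same route as the paper, which states the corollary as an immediate consequence of Theorem~\ref{hyperestimate} with $\mu=\rho\in\overline{\mathfrak a_+}$, the normalization $F_\rho(k;t)=1$ from \eqref{constF}, and the $F$-version of Lemma~\ref{F_estimates2}(2). You have merely made explicit the chain the paper leaves implicit, including the harmless verification $\text{Re}\langle w\lambda,t\rangle=\langle w\,\text{Re}\,\lambda,t\rangle$ for real $t$.
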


\begin{remarks}\quad
\begin{enumerate}
\item While the proof of \eqref{Schapira_Estimate} is by real-analytic methods and uses the Cherednik operators, 
we shall present a different approach, based on methods from complex analysis.
\item Remark 3.1 of \cite{S} implies the following asymptotics
for  $t\in \overline{\mathfrak a_+}ß,$ when $k> 0$ and some real $\lambda \in \overline{\mathfrak a_+}$ are fixed: 
\[ F_{\lambda+\rho}(k;t) \asymp\,  e^{\langle \lambda, t\rangle}.\]
For our purposes, it will however be important  to have an estimate which is 
uniform in $k$. 
\end{enumerate}
\end{remarks}

For the proof of Theorem \ref{hyperestimate}, we shall use the Phragm\'{en}-Lindel\"of principle, see e.g. 
Theorem 5.61 of \cite{T}: 

\begin{lemma}\label{Phragmen} (Phragm\'{en}-Lindel\"of). Let $f$ be holomorphic in an open neighborhood of the right half plane $\,H=\{z\in \mathbb C: \text{Re}\,z \geq 0\},$ and suppose that $f$ satisfies
\[ |f(iy)|\leq M \quad \forall y\in \mathbb R\]
 and, as $|z|=r\to \infty,$ 
\[ f(z) = O(e^{r^{\beta}}) \]
for some $\beta  <1$, uniformly in $H$. Then actually $\,|f(z)|\leq M \,$ for all $z\in H.$
\end{lemma}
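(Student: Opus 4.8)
\textbf{The plan is to} prove the Phragm\'en--Lindel\"of principle (Lemma \ref{Phragmen}) as stated, bounding a holomorphic function $f$ on the right half plane $H$ under the hypotheses $|f(iy)|\le M$ and $f(z)=O(e^{r^\beta})$ with $\beta<1$. The standard strategy is to introduce an auxiliary multiplier that decays strongly enough in $H$ to cancel the subexponential growth of $f$, then invoke the ordinary maximum modulus principle, and finally let the auxiliary parameter tend to zero. The key difficulty the multiplier must overcome is that $H$ is unbounded, so the maximum principle does not apply directly; the multiplier makes the product small near infinity, confining its maximum to the finite part of the boundary.

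\textbf{First I would} fix an exponent $\gamma$ with $\beta<\gamma<1$ and, for $\varepsilon>0$, define the auxiliary function
\[
g_\varepsilon(z) := f(z)\,e^{-\varepsilon z^\gamma},
\]
where $z^\gamma = e^{\gamma\log z}$ is defined using the principal branch of the logarithm, which is holomorphic on an open neighborhood of $H$ (with the branch cut on the negative real axis, away from $H$). Writing $z=re^{i\theta}$ with $|\theta|\le \pi/2$, one has $\mathrm{Re}(z^\gamma)=r^\gamma\cos(\gamma\theta)$, and since $\gamma<1$ we have $|\gamma\theta|\le \gamma\pi/2<\pi/2$, so $\cos(\gamma\theta)\ge\cos(\gamma\pi/2)=:\delta>0$ uniformly on $H$. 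Consequently
\[
|g_\varepsilon(z)| = |f(z)|\,e^{-\varepsilon r^\gamma\cos(\gamma\theta)} \le |f(z)|\,e^{-\varepsilon\delta r^\gamma}.
\]

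\textbf{Next I would} estimate $g_\varepsilon$ on the boundary and at infinity. On the imaginary axis, $\mathrm{Re}(z^\gamma)\ge 0$, so $|g_\varepsilon(iy)|\le |f(iy)|\le M$. On the arc $|z|=r$ in $H$, the growth hypothesis gives $|f(z)|\le C e^{r^\beta}$ for large $r$, whence
\[
|g_\varepsilon(z)| \le C\,e^{r^\beta - \varepsilon\delta r^\gamma} \longrightarrow 0 \quad\text{as } r\to\infty,
\]
because $\gamma>\beta$ forces the negative term to dominate; in particular $|g_\varepsilon|$ is bounded on $H$ and, for each fixed $\varepsilon$, is eventually below $M$ outside a large disk. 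Applying the maximum modulus principle to $g_\varepsilon$ on the bounded region $H\cap\{|z|\le R\}$ and letting $R\to\infty$, the maximum is attained on the imaginary axis, so $|g_\varepsilon(z)|\le M$ throughout $H$.

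\textbf{The main obstacle} is the final passage to the limit $\varepsilon\to 0$: I would fix an arbitrary $z\in H$ and note that $|f(z)| = |g_\varepsilon(z)|\,e^{\varepsilon\,\mathrm{Re}(z^\gamma)} \le M\,e^{\varepsilon\,\mathrm{Re}(z^\gamma)}$, and letting $\varepsilon\to 0^+$ yields $|f(z)|\le M$, as claimed. This step is elementary once the multiplier estimates are in place, but care is needed to ensure the principal branch of $z^\gamma$ is well defined and holomorphic on the open neighborhood of $H$ on which $f$ is assumed holomorphic, and that the cosine bound $\cos(\gamma\theta)\ge\delta$ is genuinely uniform over all of $H$; these are exactly the points where the condition $\beta<1$ (allowing the choice $\beta<\gamma<1$) is indispensable.
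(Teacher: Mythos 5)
Your proof is correct, and it is the classical Phragm\'en--Lindel\"of multiplier argument with $e^{-\varepsilon z^\gamma}$, $\beta<\gamma<1$ --- exactly the standard proof in Titchmarsh (Theorem 5.61 of \cite{T}), which the paper cites for this lemma rather than proving it. Two cosmetic points: you should choose $\gamma\in(\max\{\beta,0\},1)$ so that $r^\gamma\to\infty$ even when $\beta\le 0$, and the principal branch of $z^\gamma$ is holomorphic only on $H\setminus\{0\}$, not on an open neighborhood of $H$; at the boundary point $z=0$ one only has continuity (with the value $0^\gamma=0$), which is all the maximum modulus principle on $H\cap\{|z|\le R\}$ requires.
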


\bigskip
\begin{proof}[Proof of Theorem \ref{hyperestimate}]
Fix $t\in \mathfrak a$ and denote again by $t_+$ the unique element from the orbit $Wt$ which is contained 
in $ \overline{\mathfrak a_+}.$ Further, put
\[S:=\{ \lambda\in \mathfrak a_{\mathbb C}: \,\text{Re}\, \lambda \in \mathfrak a_+\}.\] 
The geometry of root systems  implies that for $\lambda\in \overline S$ and all $w\in W,$ 
\[ \langle w \,\text{Re} \lambda, t\rangle \leq 
\langle \text{Re}\lambda, t_+\rangle.\]
Fix now $w\in W$ and consider the function
\[ f(\lambda):= e^{-\langle \lambda,t_+\rangle}\cdot \frac{G_{w\lambda+\mu}(k;t)}{G_\mu(k;t)}\] 
which is holomorphic on $\mathfrak a_{\mathbb C}.$ We shall investigate $f$ on the closure $\overline S$ of $S.$ 
By part (2) of  Lemma \ref{F_estimates2}, we have
\[ |f(\lambda)|\leq f(\text{Re}\,\lambda).\]
Hence for $\lambda\in \overline S,$  Lemma \ref{F_estimates1}  leads  to the estimate
\begin{equation}\label{estim_unif} |f(\lambda)| \,\leq\, e^{-\langle\text{Re}\lambda, t_+\rangle}\cdot \frac{G_{w\text{Re}\lambda + \mu}(k;t)}{G_\mu(k;t)}
\,\leq\, \sqrt{|W|}\cdot  \frac{e^{\langle\mu,t_+\rangle}}{G_\mu(k;t)}.\end{equation}
Note that the right side is independent of $\lambda.$
Again by  Lemma \ref{F_estimates2},  we further obtain for real $\lambda\in \mathfrak a$ the uniform estimate
\begin{equation}\label{estim_imag} |f(i\lambda)| = \,\frac{|G_{iw\lambda+\mu}(k;t)|}{G_\mu(k;t)}\,\leq\, 1.\end{equation}
We claim that $|f|\leq 1\,$ on $\overline S.$ For this, fix a basis $\{\lambda_1, \ldots, \lambda_n\}\subseteq P_+$ of fundamental weights. Then each $\lambda\in \overline S$ has a unique expansion $\lambda = \sum_{i=1}^n z_i\lambda_i$ with 
$z_i\in H=\{z\in \mathbb C: \text{Re}\,z \geq 0\}.$ Consider first
$\lambda=z_1\lambda_1$ with $z_1\in H.$ In view of  estimates \eqref{estim_imag}
and \eqref{estim_unif},
we may apply Lemma~\ref{Phragmen} with $\beta = 0,$ thus obtaining
\[ |f(z_1\lambda_1)|\leq 1 \quad \forall z_1\in H.\]
We proceed by induction:
Suppose, for $1\leq m < n,$ that 
\[ |f(z_1\lambda_1+\ldots + z_m\lambda_m)|\leq 1 \quad \forall \,z_1, \ldots, z_m\in H.\]
Consider $h(z_{m+1}):= f(z_1\lambda_1+\ldots + z_m\lambda_m +z_{m+1}\lambda_{m+1})$ for $z_{m+1}\in H.$ This function is uniformly bounded on $H$ according to \eqref{estim_unif}, and for purely imaginary $z_{m+1}\in i\mathbb R$ we have
\[|h(z_{m+1})|\leq \big|f\bigl(\text{Re}(z_1\lambda_1+\ldots + z_m\lambda_m + z_{m+1}\lambda_{m+1})\bigl)\big| = 
  |f\bigl(\text{Re} z_1\cdot\lambda_1+\ldots + \text{Re}z_m \cdot\lambda_m\bigr)| 
\]
which is less or equal to $1$ by our induction hypothesis. 
By Lemma \ref{Phragmen}, we conclude that $|h(z)|\leq 1\,$ for all $z\in H$.
Thus, induction shows that $|f(\lambda)|\leq 1\,$ for all $\lambda\in \overline S,$ and in particular for all $\lambda\in \overline{\mathfrak a_+}.$ If $\lambda \in \mathfrak a$ is arbitrary, just use the fact that
$\lambda = w\lambda^\prime $ with some $w\in W$ and $\lambda^\prime\in \overline{\mathfrak a_+}.$ This implies the assertion.

\end{proof}


\section{Limit transition for Jacobi polynomials of type BC }

Let $\mathfrak a=\mathbb R^n$ with the usual Euclidian scalar product and denote by $(e_i)_{ i=1, \ldots, n}$  the standard basis of $\mathbb R^n$. 
We consider the root system $BC_n$ in $\mathbb R^n$ with the positive subsystem 
\[  BC_n^+ = \{ e_i,2e_i, \, 1\leq i \leq n\} \cup \{e_i \pm e_j, \,1\leq i<j\leq n\},
\]
 as well as the root system $A_{n-1}$ in the linear subspace 
\[\mathbb R^n_0:= \{ t\in \mathbb R^n: t_1+\ldots + t_n  = 0\}\]
with the positive subsystem 
\[ A_{n-1}^+ = \{e_i - e_j, \,1\leq i<j\leq n\}.\]

The Jacobi polynomials associated with these root systems
(following Definition \ref{JacobiPolDef})
as well as their relationship have been widely studied; see in particular  \cite{BO},
\cite{BF}, \cite{H1} and \cite{H2}.
We recall the fundamental facts:
Let \begin{equation}\label{projection}
 \pi(t) := t-\frac{1}{n}\langle t,\omega_n\rangle \omega_n \end{equation}
with 
\[\omega_n = e_1 + \ldots +e_n\]
denote the orthogonal projection of $\mathbb R^n$ onto $\mathbb R^n_0$.
The cone of dominant weights of $BC_n$ is 
\[ P_+^{BC} = \{(\lambda_1, \ldots \lambda_n) \in \mathbb Z_+^n: \lambda_1 \geq \ldots \geq \lambda_n\},\]
and the dominant weights of $A_{n-1}$ are given by
\[ P_+^A = \pi\bigl(P_+^{BC}\bigr).\]
For abbreviation, we write $P_+ := P_+^{BC}, $ which is just the set of
partitions of length~$n$. The dominance order
%
and inclusion order on $P_+$ are respectively given by
\begin{align*}
\lambda\leq \mu \,&\Longleftrightarrow\,
\sum_{j=1}^i \lambda_j \leq \sum_{j=1}^i \mu_j\,, \quad i=1, \ldots, n,\\
\lambda\subseteq\mu\,&\Longleftrightarrow\,
\lambda_i\le\mu_i\,,\quad i=1,\ldots,n.
\end{align*}
For the  $A_{n-1}$-case, we take  a real parameter $\kappa\geq 0$ and consider
 the \emph{monic} Jack polynomials $j_\lambda^\kappa$ in $n$ variables which are indexed by partitions $\lambda\in P_+$  and are uniquely characterized by the following conditions:  \vspace{-1pt}
\begin{enumerate}\itemsep=-1pt
\item[\rm{(1)}] $j_\lambda^\kappa$ is homogeneous of degree $|\lambda|$ and of the form
\[j_\lambda^\kappa = m_\lambda + \sum_{\mu<\lambda} c_{\lambda\mu}(\kappa) m_\mu\]
where $\mu < \lambda$ refers to the dominance order on $P_+$ and the $m_\lambda, \,\lambda\in P_+$, are the monomial symmetric polynomials
\[m_\lambda(x) = \sum_{\mu\in S_n\lambda} x^\mu \quad (x\in \mathbb R^n).\]
\item[\rm{(2)}] $j_\lambda^\kappa$ is an eigenfunction of the operator
\[ D_\kappa = \sum_{i=1}^n x_i^2\,\frac{\partial^2}{\partial x_i^2}\, + 2\kappa \sum_{i\not=j} \frac{x_i^2}{x_i-x_j}\frac{\partial}{\partial x_i}.\] 
\end{enumerate}
In fact, the Jack polynomials satisfy
\[ D_\kappa j_\lambda^\kappa = d_\lambda(\kappa) j_\lambda^\kappa \quad \text{with }\, d_\lambda(\kappa) =
\sum_{i=1}^n \lambda_i\bigl(\lambda_i-1+2\kappa(n-i)\bigr),\]
see \cite{Ha} or \cite{St}.
For $\kappa=0,$ we have $j_\lambda^0 = m_\lambda,$ while for
 $\kappa >0,$ the polynomial $j_\lambda^\kappa(x)$ coincides up to  constant positive factor with the Jack polynomial $J_\lambda(x;1/\kappa)$ in standard normalization as introduced in \cite{St}. 

\smallskip
The Heckman-Opdam Jacobi polynomials of type $A_{n-1}$ with multiplicity parameter $\kappa \geq 0$ 
are essentially Jack polynomials; according to  Proposition 3.3. of \cite{BO}, 
the two types of polynomials are related by
\[ P_{\pi(\lambda)}^{A}(\kappa; t) = j_\lambda^\kappa(e^{t}) \quad \text{where }\,e^t = (e^{t_1}, \ldots, e^{t_n}),\,\, t=(t_1, \ldots, t_n) \in \mathbb R^n_0\,.\]
Notice that the homogeneity of the Jack polynomials implies that for arbitrary $t\in \mathbb R^n,$ 
\begin{equation}\label{jackhom} j_\lambda^\kappa(e^{t}\,) = e^{|\lambda|(t_1+\ldots +t_n)/n}\cdot j_\lambda^\kappa(e^{\pi(t)}).\end{equation}

The Heckman-Opdam Jacobi polynomials of type $BC_n$ are
 parameterized by a multiplicity function 
$k=(k_1, k_2, k_3)\geq 0$ on $BC_n$, where $k_1$ stands
 for the parameter on $e_i$, $k_2$ for the parameter on $2e_i$ and $k_3$ for the parameter on $e_i\pm e_j$. 
Let $L_k^{BC}$ be the associated operator \eqref{Laplace}
of type $BC_n$. The corresponing eigenvalue (see \cite{Ha}) is
\[ e_\lambda(k):= d_\lambda(k_3) + (k_1+2k_2+1)|\lambda|,\]
with $d_\lambda$ as above. We then obtain from  \cite{Ha}
the following representation of the $BC_n$-type Jacobi
 polynomials $P_\lambda^{BC}(k)$ in terms of the Jack polynomials $j_\lambda^{k_3}:$

\begin{proposition}
For all $\lambda,k,t$ as above,
\begin{equation}\label{BCA} P_\lambda^{BC}(k;t) = 4^{|\lambda|}
\prod_{\mu\subset\lambda} \frac{L_k^{BC} - e_\mu(k)}{e_\lambda(k) -
  e_\mu(k)}\,j_\lambda^{k_3}\bigl(-{\rm sinh}^2\bigl(\frac{t}{2}\bigr)\bigr).\end{equation}
Here $\displaystyle\,
{\rm sinh}^2\bigl(\frac{t}{2}\bigr)$ is  understood component-wise, 
and  $\mu\subset\lambda$ means that $\mu\not=\lambda$ and $\mu_i\leq
\lambda_i$ for all $i.$
\end{proposition}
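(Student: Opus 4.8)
The plan is to verify that the right-hand side of \eqref{BCA} satisfies the two characterizing conditions of Definition \ref{JacobiPolDef} for the $BC_n$ Jacobi polynomial $P_\lambda^{BC}(k)$: namely that it is an eigenfunction of $L_k^{BC}$ with eigenvalue $\langle\lambda,\lambda+2\rho_{BC}(k)\rangle = e_\lambda(k)$, and that it equals $M_\lambda$ plus a lower-order combination of $M_\mu$ in the dominance order. Since these two conditions uniquely determine $P_\lambda^{BC}(k)$, establishing both for the proposed expression completes the proof. Throughout I would work with the substitution $x_i = -\sinh^2(t_i/2) = \tfrac12(1-\cosh t_i)$, which conjugates the differential-operator picture in $x$ to the exponential-polynomial picture in $t$.

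\emph{Eigenfunction property.} The operator $\prod_{\mu\subset\lambda}\frac{L_k^{BC}-e_\mu(k)}{e_\lambda(k)-e_\mu(k)}$ is a polynomial in $L_k^{BC}$, so it commutes with $L_k^{BC}$ and preserves each eigenspace. The key fact I would use, going back to \cite{Ha}, is that under the change of variables $x=-\sinh^2(t/2)$ the Jack polynomial $j_\lambda^{k_3}(x)$ is mapped by $L_k^{BC}$ into the span of $\{j_\mu^{k_3}(x):\mu\subseteq\lambda\}$, i.e. $L_k^{BC}$ is triangular on the Jack basis with diagonal entry $e_\lambda(k)$. Granting this triangularity, the factor with index $\mu=\lambda$ is omitted from the product (since $\mu\subset\lambda$ is strict), and each factor $L_k^{BC}-e_\mu(k)$ annihilates the $j_\mu^{k_3}$-component while the normalizing denominators make the $j_\lambda^{k_3}$-component pass through with coefficient $1$. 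Applying all factors successively kills every strictly lower Jack component and leaves exactly the top component $j_\lambda^{k_3}$ as an eigenfunction of $L_k^{BC}$ with eigenvalue $e_\lambda(k)$; this is the standard spectral-projection argument onto the eigenspace of $e_\lambda(k)$.

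\emph{Leading-term normalization.} It remains to check that the resulting $W$-invariant exponential polynomial has the expansion $M_\lambda+\sum_{\mu<\lambda}(\cdots)M_\mu$. Here I would compare leading terms: the monic Jack polynomial has $j_\lambda^{k_3}=m_\lambda+\sum_{\mu<\lambda}c_{\lambda\mu}m_\mu$, and under $x_i=-\sinh^2(t_i/2)=-\tfrac14(e^{t_i/2}-e^{-t_i/2})^2$ the top monomial $m_\lambda(x)$ contributes, after expanding and symmetrizing, a term $4^{-|\lambda|}M_\lambda$ plus strictly lower $M_\mu$; the prefactor $4^{|\lambda|}$ is precisely what restores the monic normalization $M_\lambda$. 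One must confirm that all the lower contributions (from both the tail of $j_\lambda^{k_3}$ and the subleading terms of the substitution) are genuinely dominated by $\lambda$ in the dominance order, which follows from the homogeneity degree $|\lambda|$ and the fact that the substitution only produces weights $\mu$ with $\mu\le\lambda$.

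The main obstacle I anticipate is the triangularity of $L_k^{BC}$ with respect to the Jack basis in the $x$-variables, together with the exact identification of its diagonal eigenvalue as $e_\lambda(k)=d_\lambda(k_3)+(k_1+2k_2+1)|\lambda|$. This is a computation relating the second-order operator $L_k^{BC}$ (written via $\coth(\langle\alpha,t\rangle/2)\,\partial_\alpha$) to the Jack operator $D_{k_3}$ after the hyperbolic substitution, and it is exactly the content attributed to \cite{Ha}; I would cite that reference for the triangularity and eigenvalue formula rather than reproduce the change-of-variables computation in full. Once that input is in hand, the spectral-projection and leading-term arguments are routine.
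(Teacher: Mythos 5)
Your strategy coincides with the paper's in substance: the paper cites relation (13) of \cite{Ha} to conclude that the right-hand side of \eqref{BCA} is proportional to $P_\lambda^{BC}(k;t)$ and then fixes the constant by a leading-coefficient comparison (quoting from \cite{SK} the expansion of $4^{|\lambda|}m_\lambda\bigl(\sinh^2\frac t2\bigr)$ in the $M_\mu^{BC}$), while you unpack the proportionality as a spectral-projection argument based on the same triangularity input from \cite{Ha} and then also compare leading coefficients. One imprecision in your projection step: the factors $L_k^{BC}-e_\mu(k)$ annihilate the $P_\mu^{BC}$-\emph{components}, not the ``Jack components''; $j_\mu^{k_3}\bigl(-\sinh^2\frac t2\bigr)$ is not an eigenfunction of $L_k^{BC}$. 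What you actually need is that the triangularity makes $V_\lambda:={\rm span}\{j_\mu^{k_3}(-\sinh^2\frac t2):\mu\subseteq\lambda\}$ an $L_k^{BC}$-invariant space on which the operator is triangular with diagonal entries $e_\mu(k)$, hence diagonalizable with eigenbasis $\{P_\mu^{BC}:\mu\subseteq\lambda\}$, together with $e_\lambda(k)-e_\mu(k)\geq(k_1+2k_2+1)(|\lambda|-|\mu|)>0$ for $\mu\subset\lambda$, $k\geq0$. These are easy repairs.

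The genuine defect is the sign in your normalization step. With $x_i=-\sinh^2\frac{t_i}{2}=-\tfrac14 e^{t_i}+\tfrac12-\tfrac14 e^{-t_i}$, homogeneity gives $m_\lambda(x)=(-4)^{-|\lambda|}M_\lambda^{BC}+\text{lower terms}$, \emph{not} $4^{-|\lambda|}M_\lambda^{BC}$: you silently dropped the factor $(-1)^{|\lambda|}$ from $m_\lambda(-y)=(-1)^{|\lambda|}m_\lambda(y)$. Carried out correctly, your argument shows that the right-hand side of \eqref{BCA} equals $(-1)^{|\lambda|}P_\lambda^{BC}(k;t)$, i.e.\ the monic normalization of Definition \ref{JacobiPolDef}(i) forces the prefactor $(-4)^{|\lambda|}$ rather than $4^{|\lambda|}$ (equivalently, one may keep $4^{|\lambda|}$ and evaluate $j_\lambda^{k_3}$ at $+\sinh^2\frac t2$). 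This is consistent with the paper's own subsequent remark, where condition ${\rm(i)}'$ has $u_{\lambda\lambda}=(-4)^{|\lambda|}$, and it is confirmed by a rank-one check: for $n=1$ and $\lambda=(1)$ one computes directly, with $e_1=1+k_1+2k_2$,
\begin{equation*}
4\,e_1^{-1}L_k^{BC}\bigl(-\sinh^2\tfrac t2\bigr)
=-\Bigl(M_1+\tfrac{2k_1}{e_1}\Bigr)=-P_1^{BC}(k;t).
\end{equation*}
The paper's printed proof avoids this slip at the corresponding spot by quoting the \cite{SK} formula with the \emph{positive} argument $\sinh^2\frac t2$, so the sign discrepancy sits in the printed statement as much as in your computation (the final limit formula \eqref{general} is unaffected, as its Jack argument $\gamma_a+\sinh^2\frac t2$ is the positive one). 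In short: right strategy, essentially the paper's, but your leading-term computation as written is false, and doing it correctly proves the identity only after replacing $4^{|\lambda|}$ by $(-4)^{|\lambda|}$.
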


\begin{proof}
Denote the right hand side of (\ref{BCA}) by  $\tilde P_\lambda^{BC}(k;t)$.
It follows from  relation (13) of \cite{Ha} that $\tilde P_\lambda^{BC}(k;t)$
is equal to $P_\lambda^{BC}(k;t)$ up to a multiplicative constant.
In order to identify this constant, 
 we compare the leading  terms of both polynomials in  the expansion with respect to the 
 monomial symmetric functions $ M_\mu^{BC}$ of type $BC$ as defined in \eqref{monomial}.
In fact, 
\[ 4^{|\lambda|}m_\lambda\bigl({\rm sinh}^2\frac{t}{2}\bigr) = M_\lambda^{BC}(t) \,+\sum_{\mu<\lambda} b_{\lambda\mu}M_\mu^{BC}(t)\]
with certain constants $b_{\lambda\mu}$ (see \cite{SK}, the last displayed formula on p. 383 with $t=i\theta$).
%
Next we use
the characterization of the Jack polynomials $j_\lambda^\kappa$ given above
and also, from p.~1580 of \cite{Ha}, the first part of the characterization of the 
$\tilde P_\lambda^{BC}(k;t)$ and the definition of  $t_i$. Then
we conclude that
\begin{equation}\label{tilde-p-lambda} \tilde P_\lambda^{BC}(k;t)
=M_\lambda^{BC}(t) \,+\sum_{\mu<\lambda}  d_{\lambda\mu}M_\mu^{BC}(t) 
\end{equation}
with certain coefficients $ d_{\lambda\mu}$. Therefore $\tilde P_\lambda^{BC}(k;t) = P_\lambda^{BC}(k;t)$ as claimed.
\end{proof}

We notice that representations such as (\ref{BCA}) 
were already observed by Macdonald \cite{M} and were used in
 \cite{SK} for limit transitions between different families of orthogonal polynomials. 

\medskip
{}From \eqref{BCA}, we shall deduce the following limit result.

\begin{theorem}\label{polynomlimit} Fix a parameter $0\leq a\leq \infty$ and consider $k=(k_1,k_2,k_3)$ where $k_3\geq 0$ is fixed. 
Then 
\begin{equation}\label{general}
\lim_{\substack{ k_1+k_2 \to \,\infty\\k_1/k_2\,\to \,a}}
P_\lambda^{BC}(k;t) \,=\, 4^{|\lambda|}\cdot j_\lambda^{k_3}(x(t)),
\end{equation}
where the transform $t\mapsto x(t), \mathbb R^n\to \mathbb R_+^n$ is given by 
\[  x_i(t)= \gamma_a+ {\rm sinh}^2\bigl(\frac{t_i}{2}\bigr), \quad \gamma_a = \frac{a+1}{a+2}\]
with the understanding that $\gamma_\infty =1.$
The convergence in  \eqref{general} is locally uniform in $t\in \mathbb R^n.$ 
Especially if $a=\infty, $ then
\begin{align}\label{specialcase}
\lim_{\substack{ k_1+k_2 \to \,\infty\notag\\k_1/k_2\,\to \,\infty}}
P_\lambda^{BC}(k;t) \,=& \,4^{|\lambda|}j_\lambda^{k_3}\bigl({\rm cosh}^2\frac{t}{2}\bigr) \\
=\,& \Bigl(\prod_{i=1}^n 4\,{\rm cosh}^2\frac{t_i}{2}\Bigr)^{|\lambda|/n}\cdot P_{\pi(\lambda)}^A\bigl(k_3;\pi\bigr(\log{\rm cosh}^2\frac{t}{2}\bigr)\bigr)  \end{align}
\end{theorem}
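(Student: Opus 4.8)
The plan is to establish the limit \eqref{general} directly from the representation \eqref{BCA}, treating the differential-operator factor $\prod_{\mu\subset\lambda}(L_k^{BC}-e_\mu(k))/(e_\lambda(k)-e_\mu(k))$ as a polynomial in $L_k^{BC}$ acting on the Jack polynomial $j_\lambda^{k_3}(-\sh^2(t/2))$. First I would analyze the asymptotics of the eigenvalues $e_\mu(k)=d_\mu(k_3)+(k_1+2k_2+1)|\mu|$ as $k_1+k_2\to\infty$. Since $k_3$ is fixed, the term $(k_1+2k_2+1)|\mu|$ dominates and grows linearly in $k_1+k_2$; consequently the ratio $(L_k^{BC}-e_\mu(k))/(e_\lambda(k)-e_\mu(k))$ has a denominator $e_\lambda(k)-e_\mu(k)=(k_1+2k_2+1)(|\lambda|-|\mu|)+\bigl(d_\lambda(k_3)-d_\mu(k_3)\bigr)$, which for $\mu\subset\lambda$ with $|\mu|<|\lambda|$ blows up like a constant times $k_1+k_2$, while for the (finitely many) $\mu\subset\lambda$ with $|\mu|=|\lambda|$ the denominator stays bounded but is controlled by the fixed quantity $d_\lambda(k_3)-d_\mu(k_3)$.

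Next I would examine how $L_k^{BC}$ acts in the variable $x_i=\gamma_a+\sh^2(t_i/2)$. The operator \eqref{Laplace} of type $BC_n$ contains the coefficient $k_1\coth(\langle e_i,t\rangle/2)+2k_2\coth(\langle 2e_i,t\rangle/2)$ on the short-root derivatives, and after the substitution $x_i=\sh^2(t_i/2)$ these hyperbolic coefficients combine so that, upon dividing by $k_1+k_2$ and letting $k_1/k_2\to a$, the rescaled operator $(k_1+k_2)^{-1}L_k^{BC}$ converges to a first-order differential operator in the $x$-variables. The key computation is that $\coth(t_i/2)\partial_{t_i}$ and $\coth(t_i)\partial_{t_i}$ transform under $x_i=\sh^2(t_i/2)$ into multiplication and first-order $x$-operators whose leading combination, weighted by the limiting ratio $a/(a+1)$ versus $1/(a+1)$, produces a shift of the argument by precisely $\gamma_a=(a+1)/(a+2)$. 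I expect that $(k_1+k_2)^{-1}(L_k^{BC}-e_\mu(k))$ converges to $(|\lambda|-|\mu|)$ times the identity plus a first-order operator that, when applied and combined across all factors, implements the translation $\sh^2(t_i/2)\mapsto\gamma_a+\sh^2(t_i/2)=x_i(t)$ inside the Jack polynomial.

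The cleanest way to package this is to split the product over $\mu\subset\lambda$ according to whether $|\mu|<|\lambda|$ or $|\mu|=|\lambda|$. For the factors with $|\mu|<|\lambda|$ I would rescale each factor by $(k_1+k_2)$, so the product becomes $\bigl(\prod_{|\mu|<|\lambda|}(k_1+k_2)^{-1}(L_k^{BC}-e_\mu(k))/(|\lambda|-|\mu|)\bigr)$ times a bounded remainder, and pass to the limit factor by factor; since $j_\lambda^{k_3}$ is a polynomial, each application of the limiting first-order operator keeps us in a finite-dimensional space of polynomials, so locally uniform convergence on $\mathbb R^n$ follows from convergence of the finitely many coefficients. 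For the factors with $|\mu|=|\lambda|$, $\mu\neq\lambda$, one checks that in the limit these contribute only through the leading symbol and do not alter the $m_\lambda$-leading term, which is consistent with the normalization $4^{|\lambda|}$ surviving on the right-hand side. The main obstacle will be the bookkeeping in this operator-to-translation computation: verifying that the noncommuting first-order pieces accumulated over the ordered product of $|\lambda|$ operator factors combine exactly to the single shift $x\mapsto x+\gamma_a$ rather than to some more complicated polynomial distortion. I would handle this by using the homogeneity of $j_\lambda^{k_3}$ and identity \eqref{jackhom}, together with the fact that the limiting operator is the generator of the dilation/translation giving $\gamma_a$, so that the whole product collapses to evaluation of $j_\lambda^{k_3}$ at the shifted point.

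Finally, the special case $a=\infty$ in \eqref{specialcase} is immediate: $\gamma_\infty=1$ gives $x_i(t)=1+\sh^2(t_i/2)=\ch^2(t_i/2)$, and the second equality follows from the relation $P_{\pi(\lambda)}^A(k_3;t)=j_\lambda^{k_3}(e^t)$ combined with the homogeneity identity \eqref{jackhom} applied to $t=\log\ch^2(t/2)$.
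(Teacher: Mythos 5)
Your overall skeleton matches the paper's proof: start from \eqref{BCA}, use the eigenvalue asymptotics $e_\mu(k)=d_\mu(k_3)+(k_1+2k_2+1)|\mu|$, pass to the limit factor by factor in the operator product, exploit that everything lives in a fixed finite-dimensional space of symmetric polynomials to get locally uniform convergence, and obtain \eqref{specialcase} from $\gamma_\infty=1$, the relation $P^A_{\pi(\lambda)}(k_3;t)=j_\lambda^{k_3}(e^t)$ and \eqref{jackhom} (that last paragraph is correct and is exactly the paper's argument). But there are two genuine problems in the middle. First, a misreading of the index set: as defined right after \eqref{BCA}, $\mu\subset\lambda$ is the \emph{containment} order ($\mu\neq\lambda$, $\mu_i\leq\lambda_i$ for all $i$), which forces $|\mu|<|\lambda|$; your case $|\mu|=|\lambda|$, $\mu\neq\lambda$ is vacuous. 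That is lucky for you, because your treatment of it is wrong: for such a hypothetical factor the numerator $L_k^{BC}-e_\mu(k)$ has coefficients growing like $k_1+k_2$ while the denominator $d_\lambda(k_3)-d_\mu(k_3)$ stays bounded, so the factor would blow up rather than ``contribute only through the leading symbol.''

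Second, and more seriously, the decisive step --- why the limit produces $j_\lambda^{k_3}$ evaluated at the \emph{shifted} point $x_i(t)=\gamma_a+\sinh^2(t_i/2)$ --- is left as an unproved expectation, and your proposed mechanism is confused. You expect the operator product to ``implement the translation'' by $\gamma_a$ and worry about noncommuting pieces; but all factors are polynomials in the single operator $L_k^{BC}$, so they commute exactly, and a single application of finitely many first-order factors cannot ``exponentiate a generator'' into a finite shift. The paper avoids this entirely by performing the affine substitution \emph{before} the limit: in the coordinates $x_i=\gamma_a+\sinh^2(t_i/2)$ the first-order coefficient of the transformed operator is $k_1+k_2+\tfrac12-(k_1+2k_2+1)(\gamma_a-x_i)$, and the constant part cancels in the limit precisely because $(k_1+k_2)/(k_1+2k_2)\to\gamma_a$ (this is where $\gamma_a=\tfrac{a+1}{a+2}$ comes from --- a computation you never do). Each rescaled factor then converges to $\tfrac{1}{|\lambda|-|\mu|}\bigl(\sum_i x_i\partial_{x_i}-|\mu|\bigr)$, the plain Euler operator appears, and homogeneity of $j_\lambda^{k_3}$ makes every factor tend to the identity on $j_\lambda^{k_3}$: no translation is implemented by the operators at all. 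Your version could be salvaged without the substitution: in the $y$-coordinates the rescaled limit operator is $\sum_i(y_i-\gamma_a)\partial_{y_i}$, the Euler operator centered at $\gamma_a\omega_n$, and the full product over $\mu\subset\lambda$ annihilates every homogeneous component of degree $m<|\lambda|$ in the expansion around that center (using that every integer $0\le m<|\lambda|$ occurs as $|\mu|$ for some $\mu\subset\lambda$), leaving exactly the top part, which is $j_\lambda^{k_3}(x(t))$. But this projection argument is an additional lemma you neither state nor prove; as written, the proposal asserts the conclusion of the key computation rather than performing it.
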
 

\noindent
The case $a=\infty$ occurs for instance if $k_2, \, k_3\geq 0$ are fixed and $k_1\to \infty.$

\begin{proof} We split the coordinate transform $t\to x(t)$ and consider first the transform
\[ y_i =  -{\rm sinh}^2\frac{t_i}{2}\]
which is frequently used in the $BC$-setting. In $y$-coordinates, the operator 
$L_k^{BC}$ becomes
%
\begin{multline*}
\widetilde L_k^{BC} \,=\,
\sum_{i=1}^n y_i(y_i-1)\frac{\partial^2}{\partial y_i^2} \,-\, \sum_{i=1}^n
\bigl(k_1+k_2+\frac{1}{2} -(k_1+2k_2+1)y_i\bigr)\frac{\partial}{\partial y_i}\\
+\,2k_3\sum_{i\not=j}\frac{y_i(y_i-1)}{y_i-y_j}\,
\frac{\partial}{\partial y_i},
\end{multline*}
see Section 4 of \cite{BO} (or also \cite{Ha}, Section 2.3).
Next, we carry out the linear transform
$\, \displaystyle x_i = \gamma_a - y_i,$
under which $\widetilde L_k^{BC}$ becomes
\begin{align*}\label{LBC_x} \widehat L_k^{BC}\, =  &\,
\sum_{i=1}^n (\gamma_a-x_i)(\gamma_a-1-x_i)\frac{\partial^2}{\partial x_i^2}  \,+\, 
2k_3\sum_{i\not=j}\frac{(\gamma_a-x_i)(\gamma_a-1-x_i)}{x_i-x_j}\,\frac{\partial}{\partial x_i} \\
& +\,
\sum_{i=1}^n \bigl(k_1+k_2+\frac{1}{2} -(k_1+2k_2+1)(\gamma_a-x_i)\bigr)\frac{\partial}{\partial x_i}.
\end{align*}
Equation \eqref{BCA} thus writes
\begin{equation}\label{BCA2}  P_\lambda^{BC}(k;t) = 4^{|\lambda|}\Bigl(\prod_{\mu\subset\lambda} \frac{\widehat L_k^{BC} - e_\mu(k)}{e_\lambda(k) - e_\mu(k)}\,j_\lambda^{k_3}\Bigr)(x)
\end{equation}
with $x=x(t).$ As $k_1+k_2\to\infty$, we have
\[ e_\lambda(k) \,\sim \,|\lambda|(k_1+2k_2).\]
If in addition $k_1/k_2\to a,$ then 
\[ \frac{k_1+k_2}{k_1+2k_2} \to \gamma_a\,.\]
Now let $\mu\subset\lambda$. Then $|\mu|<|\lambda| $ and for  $f\in C^\infty(\mathbb R^n)$ we obtain,
as $(k_1, k_2)\to \infty$ in the required way, 
\[ \frac{\widehat L_k^{BC}-e_\mu(k)}{e_\lambda(k)-e_\mu(k)}f(x) \,\longrightarrow \, 
\frac{1}{|\lambda|-|\mu|}\Bigl(\sum_{i=1}^n x_i\frac{\partial}{\partial x_i}\,-|\mu|\Bigr)f(x).\]
For $f$ a symmetric polynomial, the convergence is locally uniform in $x\in \mathbb R^n$. 
In our case, $f= j_\lambda^{k_3}$ is homogeneous of degree $|\lambda|.$ Thus
\[ \sum_{i=1}^n x_i\frac{\partial}{\partial x_i} j_\lambda^{k_3}(x) = |\lambda|\cdot j_\lambda^{k_3}(x)\]
and therefore
\[ \frac{\widehat L_k^{BC}-e_\mu(k)}{e_\lambda(k)-e_\mu(k)}\,j_\lambda^{k_3} \,\longrightarrow \, j_\lambda^{k_3}\]
 locally uniformly, for each $\mu\subset\lambda$. Iteration according to formula \eqref{BCA2} yields
\[P_\lambda^{BC}(k;t) \,\longrightarrow\, 4^{|\lambda|}\cdot j_\lambda^{k_3}(x(t)),\]
locally uniformly in $t$ which completes the proof of relation
\eqref{general}.
 Finally, in the setting of relation \eqref{specialcase} we have  $\gamma_a=1,$ and the claimed limit result follows from formula \eqref{jackhom}.
\end{proof}

\begin{remark}
Theorem \ref{polynomlimit}
was already stated without proof as Theorem 1 in \cite{K2}.
There it was based on an unpublished manuscript of R.~J. Beerends and
Koornwinder. The proof given in this manuscript uses the coefficients
$c_{\lambda,\mu}$ in
\[
P_\lambda(k)=\sum_{\mu\le\lambda}c_{\lambda,\mu}(k) e^\mu
\]
with $c_{\lambda,\lambda}=1$ and $c_{\lambda,w\mu}=c_{\lambda,\mu}$ ($w\in W)$
(which is equivalent to Definition \ref{JacobiPolDef}(i)).
By \eqref{Laplace} and (ii) of this Definition there follows a recurrence
relation for the $c_{\lambda,\mu}$ which determines them uniquely with the
given initial value $c_{\lambda,\lambda}=1$. Then it is shown that the
coefficients in the recurrence relation for the $c_{\lambda,\mu}$
in case $BC$ tend in the limit under consideration
to the corresponding coefficients in the
recurrence relation for the Jack case. This essentially involves
the asymptotics of the operator $L_k^{BC}$ and the eigenvalue $e_\lambda(k)$,
just as we used in the proof of Theorem \ref{polynomlimit}.
\end{remark}

\begin{remark}
It follows from Macdonald \cite{M0} (see also \cite[(5.3)]{BO},
\cite[Th\'eor\`eme~3]{La} and
\cite[p.1580]{Ha})
that from
Definition \ref{JacobiPolDef} an equivalent definition is obtained by replacing
condition (i) by
\medbreak${\rm(i)}'$\quad
$\displaystyle P_\lambda^{BC}(k;t)=\sum_{\mu\subseteq\lambda}
u_{\lambda\mu}\,j_\lambda^{k_3}\big(-\sinh^2(\tfrac12 t)\big)$,\quad
$u_{\lambda\lambda}=(-4)^{|\lambda|}$.
\medbreak\noindent
Macdonald \cite{M0} also obtained a recurrence relation for the coefficients
$u_{\lambda\mu}$. This can be used in order to give a third proof of
Theorem \ref{polynomlimit}. Furthermore, in combination with the homogeneity of the
Jack polynomials, ${\rm(i)}'$ yields another limit from $BC_n$ type
Jacobi polynomials to Jack polynomials:
\begin{equation}
\lim_{r\to\infty}e^{-|\lambda|r\langle t,\omega\rangle}
P_\lambda^{BC}(k;t+r\omega)=j_\lambda^{k_3}(e^t).
\end{equation}
This can be further specialized as a limit to $A_{n-1}$ type Jacobi polyomials.
Then it is the $q=1$ analogue of a limit from Macdonald-Koornwinder
polynomials to $A_{n-1}$ type Macdonald polynomials
given by van Diejen \cite[Section 5.2]{vD}.
\end{remark}


\section{Limit transition for hypergeometric functions of type BC }

We now extend the above limit transition to the associated hypergeometric
functions, where we restrict our attention  to the case $a=\infty.$ 

For abbreviation, we write $C_B$ for
the closed Weyl chamber associated with the positive system $BC_n^+,$ i.e. 
\[ C_B = \{t\in \mathbb R^n: t_1 \geq  \ldots \geq  t_n \geq  0\}.\]
Observe that under the projection $\pi: \mathbb R^n\to \mathbb R^n_0$, the chamber $C_B$ is mapped 
onto the closed Weyl chamber associated with the positive subsystem $A_{n-1}^+$ of $A_{n-1}$.

Again, we consider $k=(k_1, k_2, k_3) $ where $k_3\geq 0$ is fixed. We also
recapitulate that the half-sums (\ref{general-halfsum}) of positive roots for
$BC_n$ and $A_{n-1}$ are given by
\begin{equation} \rho_{BC}(k)= \sum_{i=1}^n (k_1+2k_2+2k_3(n-i))e_i  \quad\text{and}\quad 
\rho_A(k_3)=k_3 \sum_{i=1}^n (n+1-2i)e_i\,. 
\end{equation}

\begin{theorem}\label{asymptot_cont}
 For
   each $t\in \mathbb R^n$ and $\lambda\in \mathbb C^n,$ 
\begin{multline*}\lim_{\substack{ k_1+k_2 \to \,\infty\\
                                k_1/k_2\,\to \,\infty}}
F_{BC}(\lambda+\rho_{BC}(k),k;t) \\
= \, \prod_{i=1}^n \bigl({\rm cosh}^2
\frac{t_i}{2}\bigr)^{\langle\lambda,\omega_n\rangle/n}\cdot
F_A\bigl(\pi(\lambda) + \rho_A(k_3), k_3; \pi\bigl(\log {\rm
cosh}^2\frac{t}{2}\bigr)\bigr).\end{multline*}
The convergence is locally uniform with respect to $\lambda$. 
\end{theorem}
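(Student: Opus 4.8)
The plan is to combine the polynomial limit of Theorem~\ref{polynomlimit} with a normal families argument and a Carlson-type uniqueness theorem in the spectral variable $\lambda$. Put $M(t):=\prod_{i=1}^n\cosh^2\tfrac{t_i}2$ and, for the given fixed $t$, set
\[\Phi_k(\lambda):=M(t)^{-\langle\lambda,\omega_n\rangle/n}\,F_{BC}(\lambda+\rho_{BC}(k),k;t),\]
\[\Psi(\lambda):=F_A\bigl(\pi(\lambda)+\rho_A(k_3),k_3;\pi\bigl(\log\cosh^2\tfrac t2\bigr)\bigr),\]
so that the assertion becomes $\Phi_k\to\Psi$ locally uniformly on $\mathbb C^n$; both are entire in $\lambda$, since the prefactor is exponential in $\lambda$ and $F_A$ is entire in its spectral argument. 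First I would establish convergence on the lattice $P_+$. For $\lambda\in P_+$, relation~\eqref{Tom1} together with the normalisation $F(\,\cdot\,;0)=1$ (cf.~\eqref{constF}) gives $c(\lambda+\rho_{BC}(k),k)=1/P_\lambda^{BC}(k;0)$ and likewise $c_A=1/P^A_{\pi(\lambda)}(k_3;0)$, so that no explicit $c$-function is needed and
\[\Phi_k(\lambda)=M(t)^{-|\lambda|/n}\,\frac{P_\lambda^{BC}(k;t)}{P_\lambda^{BC}(k;0)}\,,\qquad \Psi(\lambda)=\frac{j_\lambda^{k_3}(e^{\pi(\log\cosh^2(t/2))})}{j_\lambda^{k_3}(\mathbf 1)}\,.\]
Applying Theorem~\ref{polynomlimit} in the form~\eqref{specialcase} both at $t$ and at $0$, and using the homogeneity~\eqref{jackhom} of $j_\lambda^{k_3}$ to rewrite $j_\lambda^{k_3}(e^{\pi(s)})=M(t)^{-|\lambda|/n}j_\lambda^{k_3}(\cosh^2\tfrac t2)$ with $s=\log\cosh^2\tfrac t2$, one checks directly that $\Phi_k(\lambda)\to\Psi(\lambda)$ for every $\lambda\in P_+$.

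Next I would pass from $P_+$ to all of $\mathbb C^n$ by a compactness argument. By Corollary~\ref{Cor_F} we have, uniformly in $k$,
\[|\Phi_k(\lambda)|\le M(t)^{-\text{Re}\langle\lambda,\omega_n\rangle/n}\,e^{\max_{w\in W}\text{Re}\langle w\lambda,t\rangle},\]
and the right-hand side is locally bounded in $\lambda$. Hence $\{\Phi_k\}$ is a normal family, and by Montel's theorem every sequence of admissible $k$ possesses a subsequence along which $\Phi_k$ converges locally uniformly to an entire function $\Phi_\infty$; by the first step $\Phi_\infty=\Psi$ on $P_+$. It then suffices to show that any such $\Phi_\infty$ coincides with $\Psi$ everywhere, for then all subsequential limits equal $\Psi$ and the full family converges to $\Psi$ locally uniformly, as claimed.

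The difference $h:=\Phi_\infty-\Psi$ is entire, vanishes on $P_+$, and inherits a bound of the above exponential-type form (the type-$A$ estimate of Corollary~\ref{Cor_F} bounds $\Psi$ in the same way). The crucial observation, which is the heart of the proof, is that $\text{Re}\langle w\lambda,t\rangle$ and $\text{Re}\langle\lambda,\omega_n\rangle$ depend only on $\text{Re}\,\lambda$. Identifying $P_+$ with $\mathbb Z_+^n$ via the row differences $m_i=\lambda_i-\lambda_{i+1}$ $(1\le i<n)$, $m_n=\lambda_n$, one has $\lambda_i=\sum_{l\ge i}m_l$, so moving a single coordinate $m_j$ along a vertical line (with the remaining coordinates fixed) changes no real part $\text{Re}\,\lambda_i$; hence the bound above stays constant in $\text{Im}\,m_j$. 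Thus, for fixed values of the other coordinates, $m_j\mapsto h$ is entire, of finite exponential type, bounded on vertical lines (in particular of type $<\pi$ along the imaginary direction), and vanishes for $m_j\in\mathbb Z_+$. A Carlson-type uniqueness theorem then forces it to vanish identically in $m_j$; inducting over $j=1,\dots,n$, and freeing one coordinate to range over all of $\mathbb C$ at each step, yields $h\equiv 0$ on $\mathbb C^n$. The required Carlson theorem itself follows from the Phragm\'en--Lindel\"of principle of Lemma~\ref{Phragmen} applied to $h(\,\cdot\,)/\sin(\pi\,\cdot\,)$ on vertical strips.

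I expect the uniqueness step to be the main obstacle. The overall exponential type of $\Phi_k$ in $\lambda$ grows with $|t|$ and is in general $\ge\pi$, so Carlson's theorem is \emph{not} applicable in arbitrary directions; what rescues the argument is precisely that the estimate of Corollary~\ref{Cor_F} is independent of $\text{Im}\,\lambda$, forcing exponential type $0$ along exactly those imaginary directions where the sub-$\pi$ growth condition is needed. Making this structural fact quantitatively compatible with the variable-by-variable induction — in particular verifying that the bound remains uniform once earlier coordinates have been freed to complex values — is the delicate part of the proof, whereas the lattice convergence and the normal families reduction are routine.
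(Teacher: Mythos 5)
Your proposal is correct and follows essentially the same route as the paper's proof: convergence on the lattice $P_+$ from Theorem~\ref{polynomlimit} combined with \eqref{Tom1} and $F_{\lambda+\rho}(k;0)=1$, a Montel/normal-families reduction based on the $k$-uniform estimate of Corollary~\ref{Cor_F}, and a variable-by-variable application of Carlson's theorem in your coordinates $m_i=\lambda_i-\lambda_{i+1}$, $m_n=\lambda_n$, which are precisely the fundamental-weight coordinates $z_i$ used in the paper. The only difference is normalization: the paper multiplies by $e^{-\langle\lambda,t\rangle}$ (taking $t\in C_B$ by $W$-invariance), so that all functions are bounded by $1$ on the tube $S=\{\lambda:\,\mathrm{Re}\,\lambda\in C_B\}$ and the plain Titchmarsh form of Carlson's theorem suffices, whereas your unnormalized $\Phi_k$ has exponential growth in $\mathrm{Re}\,m_j$ and requires the sharper variant (finite type in the half-plane, type $<\pi$ on the imaginary axis) --- a true theorem, but since your bound depends only on $\mathrm{Re}\,\lambda$ you could equally divide by $e^{c\,m_j}$ at each step and reduce to the bounded case, which is exactly what the paper's exponential normalization accomplishes globally.
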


Notice that in this situation, $\rho_{BC}(k)\to \infty$. 
The proof of Theorem \ref{asymptot_cont} will be based on Theorem \ref{polynomlimit} above and the following
well-known theorem of Carlson (see e.g. [T], Theorem 5.81):

\smallskip\noindent
\begin{theorem}(Carlson's Theorem)
Let $f$ be a function which is holomorphic in a neighborhood
 of $\{ z\in \mathbb C: \text{Re}\, z \geq 0\}$ 
and satisfies $\, f(z) = O(e^{c|z|})$ for some constant $c<\pi$.
 Suppose that $f(n) = 0$ for all $n\in \mathbb N_0$. Then $f$ is identically zero.
\end{theorem}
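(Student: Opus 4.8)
The plan is to divide out the zeros. Since the non-negative integers are precisely the zeros of $\sin(\pi z)$ lying in the closed right half-plane $H=\{\operatorname{Re} z\ge 0\}$, each of them simple and matched by a zero of $f$, the quotient $g(z):=f(z)/\sin(\pi z)$ extends to a holomorphic function on a neighbourhood of $\overline H$, and it suffices to prove $g\equiv 0$. First I would record the behaviour of $g$ on the imaginary axis: since $|\sin(\pi iy)|=\sinh(\pi|y|)$ grows like $\tfrac12 e^{\pi|y|}$ while $|f(iy)|=O(e^{c|y|})$ with $c<\pi$, one obtains the exponential decay $|g(iy)|=O\big(e^{-(\pi-c)|y|}\big)$; in particular $g$ is bounded on $i\mathbb{R}$. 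This is the point where the hypothesis $c<\pi$ enters decisively.

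Next I would bound $g$ on the positive real axis, where $\sin(\pi z)$ itself vanishes. On the two horizontal lines $\operatorname{Im} z=\pm 1$ one has $|\sin(\pi z)|\ge\sinh\pi>0$, so these lines avoid the zeros and carry the bound $|g|=O(e^{c\operatorname{Re} z})$; a Phragmén--Lindel\"of estimate in the half-strip $\{\operatorname{Re} z\ge 0,\ |\operatorname{Im} z|\le 1\}$ (whose left edge is bounded by the previous step) then propagates this to the real axis, giving $|g(x)|=O(e^{cx})$ for $x\ge 0$. Consequently $\phi(z):=g(z)e^{-cz}$ is bounded on the entire boundary of $H$: its modulus equals $|g(iy)|$ on the imaginary axis and is $O(1)$ on the real axis. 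Since $g$, like $f$, is of finite exponential type in $H$, so is $\phi$. Splitting $H$ into the two quadrants of opening $\pi/2$---on which exponential type is subcritical---Lemma \ref{Phragmen} in its sectorial form yields that $\phi$ is bounded on all of $H$.

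The final and most delicate step is to upgrade boundedness of $\phi$ to $\phi\equiv 0$, and I expect this uniqueness statement to be the crux. Here the exponential decay $|\phi(iy)|=|g(iy)|=O\big(e^{-(\pi-c)|y|}\big)$ on the imaginary axis is essential. A nonzero bounded holomorphic function on the half-plane $H$ must satisfy the log-integrability condition $\int_{\mathbb{R}}\frac{\log^-|\phi(it)|}{1+t^2}\,dt<\infty$, the half-plane analogue of the Blaschke condition coming from Nevanlinna--Smirnov theory. But exponential decay of the boundary modulus forces $\log^-|\phi(it)|\ge(\pi-c)|t|-O(1)$, whence $\int_{\mathbb{R}}\frac{\log^-|\phi(it)|}{1+t^2}\,dt=\infty$, a contradiction. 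Therefore $\phi\equiv 0$, so $g\equiv 0$ and finally $f\equiv 0$. The earlier estimates are routine applications of the Phragmén--Lindel\"of principle of exactly the kind already used in the proof of Theorem \ref{hyperestimate}; the substantive input is the combination of the sub-$\pi$ type with the exponential boundary decay, which is precisely what the half-plane uniqueness theorem converts into the vanishing of $f$.
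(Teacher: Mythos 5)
The paper does not prove this statement at all: it is quoted as a known result, with a pointer to Titchmarsh \cite{T}, Theorem 5.81. So your argument can only be compared with the classical proof, and in essence it \emph{is} the classical proof: divide out $\sin(\pi z)$, control the quotient by Phragm\'en--Lindel\"of, and let the exponential decay on the imaginary axis (this is exactly where $c<\pi$ enters) kill the function. The skeleton is sound: the zeros of $\sin(\pi z)$ in the closed right half-plane are precisely $\mathbb N_0$ and are simple, so $g=f/\sin(\pi z)$ is holomorphic near $\overline H$; the bound $|g(iy)|=O\bigl(e^{-(\pi-c)|y|}\bigr)$ is correct; and the endgame is legitimate, since for a nonzero bounded holomorphic function on $H$ the boundary values satisfy $\int_{\mathbb R}\log^-|\phi(it)|\,(1+t^2)^{-1}dt<\infty$, which exponential decay violates. (If you prefer to avoid citing Nevanlinna--Smirnov theory, map $H$ to the disc and combine Jensen's inequality with Fatou's lemma applied to $\log M-\log|\phi|\ge 0$ on circles $r\to1$; this gives the same conclusion elementarily.)

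One step is too loose to stand as written. The Phragm\'en--Lindel\"of principle in the half-strip $\{\operatorname{Re}z\ge0,\ |\operatorname{Im}z|\le1\}$ needs an a priori growth hypothesis \emph{inside} the strip (sub-doubly-exponential, the strip analogue of the $\beta<1$ condition in Lemma \ref{Phragmen}), and at that point of your argument you have no interior bound on $g$: the integers, where $1/\sin(\pi z)$ blows up, are interior points of the strip, so your assertion that ``$g$, like $f$, is of finite exponential type in $H$'' is what needs proving, not something inherited from $f$. The repair is elementary and replaces the strip PL entirely: apply the maximum principle to $g$ on the rectangles $[n-\tfrac12,n+\tfrac12]\times[-1,1]$. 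Since $|\sin\pi(x+iy)|^2=\sin^2(\pi x)+\sinh^2(\pi y)$, the horizontal edges give $|\sin\pi z|\ge\sinh\pi$, and on the vertical edges $x=n\pm\tfrac12$ one has $|\sin\pi z|=\cosh(\pi y)\ge1$; hence $|g|\le Ce^{cx}$ on each rectangle boundary and therefore throughout the half-strip. Once this is in hand, your quadrant PL step becomes superfluous: for $|\operatorname{Im}z|\ge1$ one has $|\sin\pi z|\ge\sinh(\pi|\operatorname{Im}z|)\ge c_0e^{\pi|\operatorname{Im}z|}$, so $|g(z)|\le Ce^{c|z|-\pi|\operatorname{Im}z|}\le Ce^{c\operatorname{Re}z}$ because $c<\pi$, and thus $\phi(z)=g(z)e^{-cz}$ is bounded on all of $H$ outright. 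With that substitution your proof is complete and is a faithful reconstruction of the textbook argument the paper cites.
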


\begin{proof}[Proof of Theorem \ref{asymptot_cont}]  Let $\mathcal K_+:= \{k=(k_1, k_2, k_3)\in \mathbb R^3: 
k_i\geq 0 \,\,\forall \,i\}$ and fix some $t\in \mathbb R^n$. By the $BC$-symmetry of both sides, 
we may assume that $t\in C_B\,.$ 
For $k\in \mathcal K_+$ define 
\[f_k(\lambda) := \, e^{-\langle \lambda, t\rangle}\cdot F_{BC}(\lambda+\rho_{BC}(k),k;t)\]
and 
\[ g(\lambda):= \, e^{-\langle \lambda, t\rangle}\cdot\prod_{i=1}^n 
\bigl({\rm cosh}^2 \frac{t_i}{2}\bigr)^{\langle\lambda,\omega_n\rangle/n}\cdot  F_A\bigl(\pi(\lambda) + \rho_A(k_3), k_3; 
\pi\bigl(\log {\rm cosh}^2\frac{t}{2}\bigr)\bigr).\]
The functions $f_k$ and $g$ are holomorphic on $\mathbb C^q$. 
Corollary \ref{Cor_F} readily implies that the family $\{f_k: \,k\in \mathcal K_+\}$ is locally bounded on 
$\mathbb C^q$ and  uniformly bounded on the set $\,S:= \{ \lambda\in \mathbb C^q:  \,\text{Re}\,\lambda \in C_B\}; $ 
indeed,
as $t\in C_B$ we obtain
\begin{equation}\label{uniform_S} |f_k(\lambda)|\,\leq \,1 \quad \forall \, \lambda\in S.
 \end{equation}
Now let  $(k(j))_{j\in \mathbb N}\subset \mathcal K_+\,$ be a sequence of multiplicities such that $k(j)_3 = k_3$ 
with fixed $k_3\geq 0 $ and 
$\, k(j)_1+k(j)_2 \to +\infty, \, k(j)_1/k(j)_2 \, \to +\infty.\,$ 
For abbreviation, we write 
\[ f_j := f_{k(j)}, \,\, j\in \mathbb N.\]
We have to show that $\, f_j\to g\,$ locally uniformly on $\mathbb C^q$. 
By Montel's theorem in several complex variables (see for instance \cite{G}), each locally bounded sequence of holomorphic functions on $\mathbb C^q$ has a subsequence which converges locally uniformly to some limit function which is again holomorphic on $\mathbb C^q$. 
It therefore suffices to verify the following condition: 

(M) \enskip If $(f_{j_\nu})$ is a subsequence of $(f_j)$ 
such that $f_{j_\nu} \,\to h \,$ locally uniformly on $\mathbb C^q$ for some
$h$, then 
$h=g$ on $\mathbb C^q.$ 

\smallskip
Suppose that $(f_{j_\nu})$ is a subsequence with  $f_{j_\nu} \,\to h \,$ locally uniformly on $\mathbb C^q$.
According to Theorem \ref{polynomlimit} together with \eqref{Tom1} and
$F_{\lambda +\rho}(k;0)=1$, we have
\[ f_{j_\nu}(\lambda) \,\to \, g(\lambda)\]
for all dominant weights $\lambda\in P_+\,.$ Therefore $\,h(\lambda) = g(\lambda) \,$ for all $\lambda\in P_+\,.$ 
Consider again the set $S$. We claim that 
\begin{equation}\label{hgrechts} h(\lambda) = g(\lambda) \quad \forall \lambda \in S.\end{equation}
Once  this is shown, the identity theorem will imply that $h=g$ on $\mathbb C^q$, and the verification of condition 
$(M)$ will be accomplished. For the proof of \eqref{hgrechts} we shall apply Carlson's theorem to $g-h$ on 
$S,$ which requires  suitable growth bounds on the involved functions. First, $h$ is the locally uniform limit of 
the sequence $f_{j_\nu}$ which is uniformly bounded on $S$ according to \eqref{uniform_S}. Hence
\[ |h(\lambda)|\,\leq 1 \quad \forall \lambda\in  S.\]
For an estimate of $g$ on $S,$ note that $\,\text{Re}\,\pi(\lambda)$ is contained in the closed positive chamber 
associated with $A_{n-1}^+$ for each $\lambda\in  S.$ Application of  Corollary \ref{Cor_F}  therefore yields
\[ \Big\vert e^{-\langle \pi(\lambda), \pi(\text{log}({\rm cosh}^2\frac{t}{2}))\rangle} \cdot
 F_A\bigl(\pi(\lambda)+ \rho_A(k_3), k_3; \pi\bigl(\log {\rm cosh}^2\frac{t}{2}\bigr)\bigr)\Big\vert\,\leq \, 1\]
for all $\lambda\in S.$ Let us call the function on the left $E(\lambda)$ and write
\[ |g(\lambda)| \,= \, \big\vert e^{-\langle\lambda, t\rangle}\cdot e^{\langle \pi(\lambda), 
\pi(\text{log}({\rm cosh}^2\frac{t}{2}))\rangle}\cdot\prod_{i=1}^n \bigl({\rm cosh}^2 
\frac{t_i}{2}\bigr)^{\langle\lambda,\omega_n\rangle/n}\big\vert \cdot E(\lambda).\]
As 
\[ \langle \pi(x), \pi(y)\rangle\, = 
\,\langle x,y\rangle -\,\frac{1}{n} \langle x,\omega_n\rangle \langle y,\omega_n\rangle  
\quad \forall x,y\in \mathbb R^n,\]
we obtain 
\[
 |g(\lambda)| \,= \,\big\vert e^{-\langle\lambda, t\rangle}\cdot e^{\langle \lambda, 
\text{log}({\rm cosh}^2\frac{t}{2})\rangle}\big\vert\cdot E(\lambda)\, 
 \leq \,  \prod_{i=1}^n \bigl( e^{-t_i }\,{\rm cosh}^2\frac{t_i}{2}\bigr)^{{\rm Re}\, \lambda_i}
 \]
and therefore 
\[ |g(\lambda)|\leq 1  \quad \forall \lambda\in  S.\]
Summing up, we have 
\[|g-h|\leq 2\,\,\text{ on }\, S \,\, \text{ and }\, (g-h)(\lambda) = 0\,\,\,\forall \, \lambda \in P_+.\]
As in the proof of Theorem \ref{hyperestimate}, we fix a set of fundamental weights 
$ \{ \lambda_1, \ldots, \lambda_n\}\subset P_+\,$ and  write $\lambda \in S$ as 
$\,\lambda= \sum_{i=1}^n z_i\lambda_i\,$ with coefficients $z_i\in \{ z\in \mathbb C: \text{Re}\, z\geq 0\}.$ 
Then successive use of Carlson's Theorem with respect to the variables 
$z_1, \ldots, z_n$ shows that actually $g-h = 0$ on $S$. 

\end{proof}

\section{Limit transition for spherical functions of noncompact  Grassmann manifolds}
\subsection{Spherical functions of non-compact Grassmannians}
For specific multiplicities, hypergeometric functions of type $BC$ occur as
spherical
 functions of non-compact Grassmann manifolds. This was the starting point for
 the 
construction of hypergroup convolution algebras with hypergeometric functions
as 
characters in \cite{R}. Let us recall this connection. For each of the fields 
$\mathbb F= \mathbb R, \mathbb C, \mathbb H$ we consider 
the Grassmann manifolds $\mathcal G_{p,q}(\mathbb F)= G/K$ where $G$ is one of the 
groups $SO_0(p,q), \, SU(p,q) $ or $Sp(p,q)$ with maximal compact subgroup
 $K=SO(p)\times SO(q), \, S(U(p)\times U(q))$ or $Sp(p)\times Sp(q),$ where
 we assume that $p> q.$ We regard $G$ and $K$ as  subgroups of the indefinite unitary group 
$U(p,q;\mathbb F)$ over $\mathbb F$. 
The Lie algebra $\mathfrak g$ of $G$ has the Cartan decomposition $\mathfrak g = \mathfrak k \oplus \mathfrak p$ where $\mathfrak k$ is the Lie algebra of $K$ and $\mathfrak p$ consists of the $(p+q)$-block matrices
\[ \begin{pmatrix} \,0 & X \\
\overline X^t & 0 
   \end{pmatrix}, \quad X\in M_{p,q}(\mathbb F).\]
  As a maximal abelian subspace of $\mathfrak p$ we choose
   \[ \mathfrak a = \left\{ H_t = \begin{pmatrix} 0_{p\times p}& \begin{matrix}\underline t \\
                            0_{(p-q)\times q}\end{matrix}\\
\begin{matrix} \,\underline t& 0_{q\times(p-q)}\end{matrix}& 0_{q\times q}\end{pmatrix}, \,\, t\in \mathbb R^q \right\}\]
where $\, \underline t := \text{diag}(t_1, \ldots, t_q)$ is the $q\times q$ diagonal matrix corresponding to $t.$ 

The  restricted root system $\Delta=\Delta(\mathfrak g, \mathfrak a)$ is of type $BC_q$ with the understanding that zero is allowed as a multiplicity on the long roots. 
 We identify $\mathfrak a^*$ with $\mathfrak a$ via the Killing form and $\mathfrak a$  with $\mathbb R^q$ via 
the mapping $H_t\mapsto t$. Under this identification, the Killing form corresponds to a constant multiple of the Euclidean
scalar product on $\mathbb R^q$, and
 \[ \Delta = BC_q = \{ \pm e_i, \pm 2e_i, \pm e_i \pm e_j, \,\, 1\leq i < j \leq q\} \subset \mathbb R^q.\]
 The geometric multiplicities of the roots are given by 
 \[m_\alpha = \begin{cases} d(p-q) & \text{for }\,\alpha = \pm e_i\\
                                              d-1 & \text{for }\,\alpha = \pm 2e_i\\
                                              d & \text{for }\,\alpha = \pm e_i\pm e_j.
              \end{cases}\]
              where $d= \dim_{\mathbb R}\mathbb F.$ 
We consider the spherical functions of  $G/K$  as functions on $A = \exp \mathfrak a$. 
Let $F_{BC}$ denote the hypergeometric function associated with 
$ R=BC_q$ and multiplicity  $k_{\alpha} = \frac{1}{2}m_\alpha$ ($m_\alpha$ as above), and denote by $\widetilde F_{BC}$ the hypergeometric function associated with the rescaled
 root system $ \widetilde R=2BC_q$ and multiplicity $\widetilde k_{2\alpha} = k_\alpha.$ 
Then according to Remark 2.3 of
\cite{H3}
and Lemma \ref{scaling}, 
the spherical functions of the Grassmannian $\mathcal G_{p,q}(\mathbb F)$ 
are given by
\begin{equation}\label{grassmann-spherical-bc}
 \varphi_\lambda(a_t) \,=\,  \widetilde F_{BC}(\lambda, \widetilde k; t)
 \,=\, F_{BC}(\lambda/2,k; 2t), \quad \lambda \in \mathbb C^q, \end{equation}
 where $$ t\in \mathbb R^q \quad\text{and}\quad 
a_t=e^{H_t}=\begin{pmatrix}{\rm cosh}\,\underline t&0&{\rm sinh}\, \underline t \\ 0&I_n&0  \\
 {\rm sinh}\,\underline t&0&{\rm cosh}\,\underline t
                            \end{pmatrix}.$$
The limit $k_1\to\infty$ in Theorem \ref{asymptot_cont} here 
corresponds to $p\to \infty$.
In order to  identify  the limit in this case, we  recapitulate some facts on spherical functions of type A.

\subsection{Spherical functions of type A}\label{subsection-a-fall}
 Consider the symmetric spaces $G/K$ where $G$ is one of the connected reductive groups 
$GL_+(q,\mathbb R),$ $GL(q,\mathbb C),$ $GL(q, \mathbb H)$
 with maximal compact subgroup $K=SO(q), \,U(q)$ and $Sp(q)$, 
respectively.  We have the Cartan decomposition  $G=KAK$ with 
\begin{equation}\label{A-a-fall}
 A= \exp \mathfrak a, \quad \mathfrak a=\{ \,\underline t = \text{diag}(t_1, \ldots, t_q),  \,\,\, t=(t_1, \ldots, t_q)\in \mathbb R^q \}.
\end{equation}
For the moment, we consider the spherical functions of $G/K$ as functions on $\mathfrak a$, where we 
identify $\mathfrak a \cong \mathbb R^q $
via $\,\underline t\,\mapsto t.$ The spherical functions of $G/K$ are then
 characterized as the continuous functions on $\mathbb R^q$ 
which are symmetric and satisfy the product formula
\begin{align} \label{prod_A}
\psi(t)\psi(s) = \int_K \psi\bigl(\log(\sigma_{sing}(e^{\underline t}\, k\,e^{\underline s}))\bigr)dk;\end{align}
here  $\sigma_{sing}(M) = (\sigma_1, \ldots,\sigma_q) \in \mathbb R^q$ denotes
the singular values of $M\in M_q(\mathbb F)$ ordered by size: $\sigma_1\geq \ldots \geq \sigma_q.$ 
The spherical functions of $G/K=GL(q, \mathbb F)/U(q,\mathbb F)$ are closely related to those of $G_1/K_1$  where $G_1$ is the corresponding semisimple 
group $SL(q, \mathbb F)$ and $K_1 = SU(q, \mathbb F).$ 
Indeed, consider the orthogonal projection $\pi: \mathbb R^q \to \mathbb R_0^q$ as in \eqref{projection}. 
In the same way as above, the spherical functions of $G_1/K_1$ 
may be characterized as the symmetric functions $\psi$ on 
$\mathbb R_0^q$ which satisfy the 
same product formula \eqref{prod_A}.
Now suppose that $\psi$ is a spherical function of $G/K$.
Then for $t\in \mathbb R^q,$ we have
\[ \psi(t) = \psi(t-\pi(t) + \pi(t)) = \,\psi(t-\pi(t))\cdot \psi(\pi(t))\]
because $\,t-\pi(t)$ corresponds to the scalar matrix $\, \exp\left( \sum_{i=1}^q t_i/q\right)\cdot I_q$
 which belongs to the subgroup $Z_{\mathbb R}:=\{a \cdot I_q:\> a>0\}$ of the center  of $G$.
As the  restriction of $\psi$ to $Z_{\mathbb R}$ is  multiplicative on  $Z_{\mathbb R}$, 
 we have $\psi (a\cdot I_q )= a^m$ with some exponent $m\in\mathbb C $.  Therefore
\begin{equation}\label{semisimple-auf-reduktiv} \psi(t) = \exp\bigl(m\cdot\sum_{i=1}^q t_i/q\bigr) \cdot 
\psi(\pi(t))
 \end{equation}
where the restriction $\psi|_{\mathbb R_0^q}$ corresponds to a spherical function of $G_1/K_1$. 
Conversely, it is easily checked that for a given spherical function $\psi$ of $G_1/K_1$,
formula \eqref{semisimple-auf-reduktiv} defines an extension to a   spherical function $\psi$ of $G/K$.

We now return to the usual convention and consider spherical functions as
functions on the group.
For $G_1/K_1$, the geometric multiplicity on the restricted root system
$\Delta=A_{q-1}$ is given by $m=d$. Therefore,
again according to Remark 2.3 of \cite{H3} and Lemma \ref{scaling}, 
the spherical functions of  $G_1/K_1$  can be identified as
\begin{equation}\label{spherical-a}
 \psi_\lambda(e^{\underline t}\,) \,=\, F_{A}(\lambda/2,d/2; 2t), \quad t\in \mathbb R_0^q,\end{equation}
with $ \lambda\in\mathbb C_0^q:=\{\lambda\in\mathbb C^q:\>
\sum_{i=1}^q\lambda_i=0\}$. For $\lambda \in \mathbb C^q, $ put $\, m = \sum_{i=1}^q \lambda_i\,.$ Then
\begin{equation}\label{def-m}\langle t-\pi(t),\lambda\rangle =m\cdot\sum_{i=1}^q t_i/q  \,.\end{equation}
This shows that we can parameterize the spherical functions of  $G/K$ according to
\begin{equation}\label{spherical-a-reduktiv}
 \psi_\lambda(e^{\underline t}) \,=\,e^{\langle t-\pi(t),\lambda\rangle} \cdot
 F_{A}\bigl(\pi(\lambda/2\bigr),d/2; \pi(2t)\bigr), \quad\lambda\in\mathbb C^q. \end{equation}
 With the notions of 
(\ref{grassmann-spherical-bc}) and (\ref{spherical-a-reduktiv}), Theorem
\ref{asymptot_cont} now implies the following limit relation.

\begin{corollary}\label{limtit-sphaerisch}
The spherical functions $\varphi_\lambda$ of  $\mathcal
G_{p,q}(\mathbb F)$ and
$ \psi_\lambda$ of $GL(q,\mathbb F)/U(q,\mathbb F)$ satisfy 
$$\lim_{p\to\infty}\varphi_{\lambda+\rho_{BC}^{geo}}(a_t)=
\psi_{\lambda+\rho_{A}^{geo}}({\rm cosh }\,\underline t\,)$$
for all $\lambda\in\mathbb C^q$ and $t\in\mathbb R^q$,  with  
the ``geometric'' constants $\rho_R^{geo} = 2\rho_{R}(k)$ given
by 
$$\rho_{BC}^{geo} = \sum_{i=1}^q (d(p+q+2-2i)-2)e_i  \quad\text{and}\quad 
\rho_A^{geo} = \sum_{i=1}^q d(q+1-2i)e_i .$$
\end{corollary}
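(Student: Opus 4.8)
The plan is to read the statement off directly from Theorem \ref{asymptot_cont}, the only analytic ingredient; everything else is bookkeeping of scaling factors and of the $\rho$-shifts. First I would unwind the geometric normalizations. By \eqref{grassmann-spherical-bc} we have $\varphi_\mu(a_t)=F_{BC}(\mu/2,k;2t)$, and since $\rho_{BC}^{geo}=2\rho_{BC}(k)$ this gives
\[
\varphi_{\lambda+\rho_{BC}^{geo}}(a_t)=F_{BC}\bigl(\tfrac{\lambda}{2}+\rho_{BC}(k),\,k;\,2t\bigr).
\]
I would then check that the limit $p\to\infty$ is exactly the regime of Theorem \ref{asymptot_cont}: with $d=\dim_{\mathbb R}\mathbb F$ the geometric multiplicities are $k=\bigl(\tfrac12 d(p-q),\tfrac12(d-1),\tfrac12 d\bigr)$, so as $p\to\infty$ (with $q,d$ fixed) one has $k_1\to\infty$ while $k_2$ and $k_3=d/2$ stay fixed; hence $k_1+k_2\to\infty$ and $k_1/k_2\to\infty$, which is the case $a=\infty$.

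Next I would apply Theorem \ref{asymptot_cont} with spectral variable $\lambda/2$ and space variable $2t$. Since $\cosh^2\tfrac{2t_i}{2}=\cosh^2 t_i$ and $\log\cosh^2\tfrac{2t}{2}=\log\cosh^2 t$, the theorem yields
\[
\lim_{p\to\infty}\varphi_{\lambda+\rho_{BC}^{geo}}(a_t)
=\prod_{i=1}^q\bigl(\cosh^2 t_i\bigr)^{\langle\lambda/2,\omega_q\rangle/q}\cdot
F_A\bigl(\pi(\lambda/2)+\rho_A(d/2),\,d/2;\,\pi(\log\cosh^2 t)\bigr),
\]
using $k_3=d/2$. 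It then remains to identify this expression with $\psi_{\lambda+\rho_A^{geo}}(\cosh\underline t)$ via \eqref{spherical-a-reduktiv}. Writing $e^{\underline s}=\cosh\underline t$, i.e.\ $s=\log\cosh t$, and $\mu=\lambda+\rho_A^{geo}$, the hypergeometric factor matches: indeed $\pi(2s)=\pi(\log\cosh^2 t)$, while $\rho_A^{geo}=2\rho_A(d/2)$ and $\rho_A(d/2)\in\mathbb R_0^q$ (being a combination of the roots $e_i-e_j$) is fixed by $\pi$, so $\pi(\mu/2)=\pi(\lambda/2)+\rho_A(d/2)$.

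The last step is to reconcile the two prefactors, which is where the scaling care concentrates. Using \eqref{def-m}, the exponential factor in \eqref{spherical-a-reduktiv} depends only on $\sum_i\mu_i$; since $\rho_A^{geo}\in\mathbb R_0^q$ has vanishing coordinate sum, $\sum_i\mu_i=\langle\lambda,\omega_q\rangle$, so $\langle s-\pi(s),\mu\rangle=\tfrac1q\langle\lambda,\omega_q\rangle\sum_i\log\cosh t_i$ and hence $e^{\langle s-\pi(s),\mu\rangle}=\prod_i(\cosh t_i)^{\langle\lambda,\omega_q\rangle/q}$. On the other hand the prefactor produced by the theorem is $\prod_i(\cosh^2 t_i)^{\langle\lambda/2,\omega_q\rangle/q}=\prod_i(\cosh t_i)^{\langle\lambda,\omega_q\rangle/q}$, the same quantity. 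I expect the main (and only real) obstacle to be exactly this arithmetic: tracking the factor $2$ coming from Lemma \ref{scaling} and the definition $\rho^{geo}=2\rho$, and confirming that the two exponential prefactors coincide; there is no new convergence statement to prove, the limit being supplied verbatim by Theorem \ref{asymptot_cont}, whose convergence is even locally uniform in $\lambda$.
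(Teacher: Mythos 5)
Your proposal is correct and follows essentially the same route as the paper's proof: unwind the geometric normalization via \eqref{grassmann-spherical-bc} and $\rho^{geo}_R=2\rho_R(k)$, apply Theorem \ref{asymptot_cont} with spectral variable $\lambda/2$ and space variable $2t$ in the regime $k=(d(p-q)/2,(d-1)/2,d/2)$, and match the result against \eqref{spherical-a-reduktiv} using \eqref{def-m} together with the facts that $\rho_A(k_3)\in\mathbb R_0^q$ is fixed by $\pi$ and has vanishing coordinate sum. The only (cosmetic) difference is that the paper rewrites the prefactor as $e^{\langle \ln\cosh^2 t-\pi(\ln\cosh^2 t),\lambda/2\rangle}$ and inserts $\rho_A(k_3)$ into the pairing by orthogonality, whereas you verify directly that both prefactors equal $\prod_i(\cosh t_i)^{\langle\lambda,\omega_q\rangle/q}$.
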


\begin{proof} From relation (\ref{grassmann-spherical-bc}),  Theorem
\ref{asymptot_cont} and identity (\ref{def-m}) we obtain 
\begin{align}
\lim_{p\to\infty}&\varphi_{\lambda + \rho_{BC}^{geo}}(a_t)=\lim_{k_1\to\infty}
F_{BC}(\lambda/2 +\rho_{BC}(k), k;2t)\notag\\ 
&=  \prod_{i=1}^q ({\rm cosh}^2 t_i)^{\langle \lambda,\omega_q\rangle/2q}\cdot
F_A\bigl(\pi(\lambda/2) + \rho_A(k_3), d/2;
 \pi\bigl(\ln{\rm cosh}^2 t\bigr)\bigr)
\notag\\
&= e^{\langle \ln{\rm cosh}^2 t-\pi(\ln{\rm cosh}^2 t),
 \lambda/2\rangle}\cdot
F_A\bigl(\pi(\lambda/2)+\rho_A(k_3), d/2;
 \pi\bigl(\ln{\rm cosh}^2 t\bigr)\bigr), 
\notag\end{align}
with $k=(d(p-q)/2, (d-1)/2, d/2)$.
Using $\rho_A(k_3)\in\mathbb R_0^q$ and  (\ref{spherical-a-reduktiv}), we conclude that
this limit equals
\begin{multline*}
e^{\langle \ln{\rm cosh}^2 t-\pi(\ln{\rm cosh}^2 t),\,
 \lambda/2+\rho_A(k_3)\rangle}\cdot
F_A\bigl(\pi(\lambda/2+\rho_A(k_3)), d/2;
 \pi\bigl(\ln{\rm cosh}^2 t\bigr)\bigr)\\
=\,\psi_{\lambda+ \rho_{A}^{geo}}({\rm cosh}\, \underline t\,)\end{multline*}
as claimed.
\end{proof}

We finally mention that Corollary \ref{limtit-sphaerisch}  can be also obtained by sharp estimates
on the order of convergence, by comparing explicit versions of the
 Harish-Chandra integral representations 
of the involved spherical functions. This is  work in progress.
We also remark that our limit transition for hypergeometric functions has
a counterpart in the Euclidean case, namely the convergence
of (suitably scaled) Dunkl-Bessel functions of type B to such of type A, 
which was obtained in \cite{RV1} by completely different 
methods.


\section{Spherical functions of infinite-dimensional Grassmannians}

We now discuss an interpretation of the preceding limit results in the context of infinite dimensional symmetric spaces
and Olshanski spherical pairs.
For the general background on this subject we refer to Faraut \cite{F} and
Olshanski \cite{Ol1}, \cite{Ol2} . In order
to be in agreement with standard terminology, we slightly change our notation.
We
consider the Grassmann manifolds $G_n/K_n$ with  $G_n=SO_0(n+q,q), \, SU(n+q,q) $ or $Sp(n+q,q)$ 
and maximal compact subgroup
 $K_n=SO(n+q)\times SO(q), \, S(U(n+q)\times U(q))$ or $Sp(n+q)\times Sp(q).$
 In all three cases, $G_n$
 is regarded as a closed subgroup of $G_{n+1}$ with $K_n=G_n\cap K_{n+1}$. Consider the inductive limits 
 $G_\infty = \lim_{\rightarrow} G_n$ 
and $K_\infty:= \lim_{\rightarrow} K_n$.  Then $(G_\infty,
 K_\infty)$ is an Olshanski spherical pair, and $G_\infty/K_\infty$ is 
one of the infinte-dimensional Grassmannians 
$SO_0(\infty,q)/SO(\infty)\times SO(q), \, SU(\infty,q)/S(U(\infty)\times U(q)), \, 
Sp(\infty,q)/Sp(\infty)\times Sp(q).$
A continuous function $\phi:G_\infty\to \mathbb C$  is called an Olshanski spherical 
function of $(G_\infty,K_\infty)$  if $\phi$ is $K_{\infty}$-biinvariant and satisfies the product
formula
$$\phi(g)\cdot \phi(h)=\lim_{n\to\infty}\int_{K_n} \phi(gkh)\>
dk\quad\quad\text{for}\quad g,h\in G_\infty.$$
We shall now classify the Olshanski spherical 
functions of $(G_\infty, K_\infty)$ without  representation theory.

For this we use the decomposition $G_n=K_n A_n^+ K_n$ 
\[   A_n^+:=\Biggl\{\begin{pmatrix}{\rm cosh}\,\underline t&0&{\rm sinh}\,\underline t \,\\ 0&I_n&0  \\
 {\rm sinh}\,\underline t&0&{\rm cosh}\,\underline t\
                            \end{pmatrix}:\>  t\in C_B\Biggr\}\]
of representatives of the $K_n$-double cosets in $G_n$, where again
$$C_B:=\{t=(t_1,\ldots,t_q)\in\mathbb R^q:\> t_1\ge t_2\ge\ldots\ge t_q\ge0\}$$
denotes the closed Weyl chamber of type $BC.$
Therefore, independently of $n$, we 
 identify $A_n^+$ with the set of diagonal matrices
\[ D :=\{{\rm cosh}\, \underline t:= {\rm diag}({\rm cosh }\, t_1,\ldots,{\rm cosh}\, t_q):\> t\in C_B\}.\]
This gives the
topological identification $G_n//K_n\simeq A_n^+ \simeq D.$
Notice that the elements of $D$ are just the lower right $q\times q$-blocks of
the matrices from $A_n^+$. 
In the same way,
\[G_\infty//K_\infty\simeq A_\infty^+ :=
\Biggl\{a_t^\infty:=\begin{pmatrix}{\rm cosh}\,\underline t&0&{\rm sinh}\,\underline t
\\ 0&I_\infty&0 
 \\ {\rm sinh}\,\underline t&0&{\rm cosh}\,\underline t\,
                            \end{pmatrix}:\>  t\in C_B\Biggr\}\simeq D.\]
By definition of the inductive limit topology, a function
$\phi:G_\infty\to\mathbb C$ is continuous and $K_\infty$-biinvariant iff for all
$n\in\mathbb N$, $\phi|_{G_n}$ is continuous and $K_n$-biinvariant. The space of all continuous,  
$K_\infty$-biinvariant functions on
$G_\infty$ may thus be identified with the space of all continuous functions on $D.$ Using this convention, the 
 Olshanski spherical 
functions of $(G_\infty, K_\infty)$ can be characterized as follows:

\begin{lemma}\label{limit-product-formula}
A continuous  $K_\infty$-biinvariant function $\phi:G_\infty\to \mathbb C$ is an Olshanski spherical function 
if and only if there is a continuous function  $\tilde\phi: D\to \mathbb C$ with
 $\phi(a_t^\infty)=\tilde\phi({\rm cosh}\, \underline t)$ for $t\in C_B$
such that  $\tilde\phi$ satisfies  the product
formula
\begin{equation}\label{prod-formel-1}
\tilde\phi(a)\cdot \tilde\phi(b)=
 \int_{U(q,\mathbb F)} \tilde\phi(\sigma_{sing}(akb)) dk, 
 \quad a, b\in D.
\end{equation}
Here the vector $\sigma_{sing}(...) \in \mathbb R^q$ is identified with the corresponding diagonal matrix. 
\end{lemma}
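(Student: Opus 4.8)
The plan is to reduce the Olshanski product formula, whose right side is the limit $\lim_{n\to\infty}\int_{K_n}$, to the single finite integral over $U(q,\mathbb F)$ in \eqref{prod-formel-1}, by an explicit block computation combined with a concentration argument for Haar measure on the large compact groups $K_n$. Throughout I abbreviate $C_s=\cosh\underline s$ and $S_s=\sinh\underline s$, and I write elements of $G_n$ in the $(q,n,q)$ block partition for which
\[ a_t=\begin{pmatrix} C_t & 0 & S_t\\ 0 & I_n & 0\\ S_t & 0 & C_t\end{pmatrix}. \]
Since $\phi$ is $K_\infty$-biinvariant and every $g\in G_\infty$ lies in some $K_\infty a_s^\infty K_\infty$ by the Cartan decomposition $G_n=K_nA_n^+K_n$, it suffices to test the product formula on $g=a_s^\infty$, $h=a_t^\infty$; the left side is then $\tilde\phi(C_s)\tilde\phi(C_t)$.

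The first key observation is that the relevant $K_\infty$-biinvariant of an element $g\in G_n$ is the tuple of singular values of its lower right $q\times q$ block $g_{33}$. Indeed, under $g\mapsto kgk'$ with $k,k'\in K_n$ the block $g_{33}$ transforms as $v\,g_{33}\,v'$ with $v,v'$ in the $U(q,\mathbb F)$-factor of $K_n$, so $\sigma_{sing}(g_{33})$ is biinvariant; and for $g=a_r$ one has $g_{33}=C_r=\cosh\underline r$, whose singular values are exactly $(\cosh r_1,\dots,\cosh r_q)$ since $r\in C_B$. Hence $\phi(g)=\tilde\phi(\sigma_{sing}(g_{33}))$ for every $g\in G_n$. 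Writing $k=(u,v)$ with $u$ in the first factor of $K_n$ and $v$ in the $U(q,\mathbb F)$-factor, and letting $P=P(u)$ be the top left $q\times q$ corner of $u$, a direct block multiplication gives
\[ (a_s\,k\,a_t)_{33}=S_s\,P\,S_t+C_s\,v\,C_t, \]
so that $\int_{K_n}\phi(a_s^\infty k a_t^\infty)\,dk=\int_{K_n}\tilde\phi\big(\sigma_{sing}(S_sP(u)S_t+C_svC_t)\big)\,dk$.

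The main step, and the main obstacle, is to pass to the limit $n\to\infty$ inside this integral. Two facts are decisive. First, the $v$-marginal of normalized Haar measure on $K_n$ is normalized Haar measure on $U(q,\mathbb F)$, being the pushforward under a surjective homomorphism of compact groups. Second, the corner $P(u)$ of a Haar-distributed element $u$ of the large factor $SO(n+q)$, $SU(n+q)$ or $Sp(n+q)$ (or a coset thereof, in the $SU$ case) tends to $0$: since the rows of $u$ are unit vectors in a space of dimension growing with $n$, one has $\mathbb E\,\|P(u)\|_{\mathrm{HS}}^2=O(1/n)$, so $\|P(u)\|\to 0$ in probability. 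Using that singular values depend $1$-Lipschitz continuously on the matrix and that $\tilde\phi$ is uniformly continuous on the fixed compact subset of $D$ in which all arguments lie (the smallest singular value of $C_svC_t$ is at least $\cosh s_q\cosh t_q\ge 1$), a standard split of the integral over $\{\|P(u)\|\le\delta\}$ and its complement yields
\[ \lim_{n\to\infty}\int_{K_n}\phi(a_s^\infty k a_t^\infty)\,dk=\int_{U(q,\mathbb F)}\tilde\phi\big(\sigma_{sing}(C_s\,v\,C_t)\big)\,dv. \]

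Finally I would read off the equivalence. With $a=C_s$ and $b=C_t$ ranging over all of $D$ as $s,t$ range over $C_B$, the displayed limit is precisely the right side of \eqref{prod-formel-1}, while the left side of the Olshanski formula equals $\tilde\phi(a)\tilde\phi(b)$. Since this identification of the two integrals holds for every continuous $\tilde\phi$ and all $a,b\in D$, the Olshanski product formula for $\phi$ holds if and only if $\tilde\phi$ satisfies \eqref{prod-formel-1}, proving both implications. I expect the concentration estimate for $P(u)$ and the justification of the limit interchange to be the only substantive points; the block computation and the biinvariance reduction are routine.
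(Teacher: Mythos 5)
Your argument is correct, but it takes a genuinely different route from the paper. The paper does not argue probabilistically: it invokes the explicit product formula of Proposition 2.2 of \cite{R}, which rewrites $\int_{K_n}\phi(a_t^n\,k\,a_s^n)\,dk$ \emph{exactly} as an integral over the matrix ball $B_q=\{w\in M_q(\mathbb F):w^*w<I\}$ times $U_0(q,\mathbb F)$ against the density $c_n^{-1}\Delta(I-w^*w)^{(n+q)d/2-\gamma}$ --- here $w$ is precisely the distribution of your corner $P(u)$ --- and then concludes by observing that these probability measures on $B_q$ tend weakly to $\delta_0$ as $n\to\infty$, which gives the limiting integral in one stroke. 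Your replacement of this citation by the block identity $(a_s\,k\,a_t)_{33}=S_s P(u) S_t+C_s v C_t$, the second-moment bound $\mathbb E\,\|P(u)\|_{\mathrm{HS}}^2=O(1/n)$, and the $1$-Lipschitz dependence of singular values is a sound, self-contained alternative; it is even slightly more robust in the $S(U(n+q)\times U(q))$ case, since you only need the two marginals of $(P(u),v)$ and no joint (independence) structure, whereas the paper delegates that point to the exact formula of \cite{R}. What the paper's route buys is the closed finite-$n$ expression \eqref{prod-limes-formel}, which it explicitly reuses afterwards (the remark following the corollary to Theorem \ref{ident-ol} notes that the classification of Olshanski spherical functions rests only on this formula); what your route buys is independence from \cite{R} at the cost of the concentration estimate. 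One small point to patch in your writeup: the $v$-marginal of Haar measure on $K_n$ is Haar measure on the \emph{connected} group $U_0(q,\mathbb F)$ (for $\mathbb F=\mathbb R$ this is $SO(q)$, not $O(q)=U(q,\mathbb R)$), so, exactly as in the paper's last step, you must add the easy observation that the integral over $U_0(q,\mathbb F)$ may be replaced by the integral over $U(q,\mathbb F)$ --- for instance because a diagonal sign matrix $h$ commutes with $C_s$, so $\sigma_{sing}(C_s(hv)C_t)=\sigma_{sing}(h\,C_s v C_t)=\sigma_{sing}(C_s v C_t)$. With that addendum your proof is complete.
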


\begin{proof} Let $\phi$ be a continuous  $K_\infty$-biinvariant function on
  $G_\infty$. By the preceding discussion, $\phi$ is  Olshanski spherical if
  and only if there is a continuous function  $\tilde\phi:  D \to \mathbb C$ with 
  $\phi(a_t^\infty)=\tilde\phi({\rm cosh }\,t)$ for $t\in C_B$ such that  $\tilde\phi$ satisfies
\begin{equation}\label{limit-eq1}
\tilde\phi({\rm cosh }\,\underline t\,)\cdot \tilde\phi({\rm cosh }\,
\underline s\,)\,=\,\lim_{n\to\infty}\int_{K_n}\phi(a_t^\infty \, k \, a_s^\infty)\, dk 
=\lim_{n\to\infty}\int_{K_n}\phi(a_t^n \, k \, a_s^n)\, dk 
\end{equation}
for $s,t\in C_B$. We shall use Proposition 2.2 of \cite{R} to rewrite the integrals on
the right hand side. Let $B_q:=\{w\in M_q(\mathbb F):\> w^*w< I\}$ and
\[ c_n:=\int_{B_q}\Delta(I-w^*w)^{(n+q)d/2-\gamma} \>dw \quad \text{with }\, \gamma:=d(q-1/2)+1,\]
where $\Delta$ denotes the determinant and $dw$ means integration with respect to Lebesgue measure. Then
\begin{align}\label{prod-limes-formel}
\int_{K_n}\phi(a_t^n \, k \, a_s^n)\> dk 
=
 \,c_n^{-1}\int_{B_q}\int_{U_0(q,\mathbb  F)}&
\tilde\phi(\sigma_{sing}({\sinh}\, \underline t \> w \> {\rm sinh}\, \underline s\,+ \,{\rm cosh }\,\underline t \, k 
\,{\rm cosh }\, \underline s))\cdot \notag\\
&\cdot\Delta(I-w^*w)^{(n+q)d/2-\gamma} \> dk \> dw
\end{align}
where $U_0(q,\mathbb F)$ is the connected component of $U(q,\mathbb F).$
The probability measures
\[c_n^{-1}\cdot\Delta(I-w^*w)^{(n+q)d/2-\gamma} 
dw\]
are compactly supported in $B_q$ and tend weakly to the point
measure $\delta_0$ for $n\to \infty.$ Therefore (\ref{limit-eq1}) is equivalent to
 \begin{equation}\label{limit-eq2}
\tilde\phi({\rm cosh} \,\underline t\,)\cdot \tilde\phi({\rm cosh} \,\underline s\,)\,=\,
\int_{U_0(q,\mathbb
   F)}\tilde\phi(\sigma_{sing}({\rm cosh}\,\underline t \,k\,{\rm cosh} \,\underline s)) \> dk.
\end{equation}
Finally, is easily checked that the group $U_0(q,\mathbb F)$ 
may be replaced by $U(q,\mathbb F)$ in the integral, which completes the proof.
\end{proof}

We consider the reductive symmetric spaces $G/K= GL(q,\mathbb F)/U(q,\mathbb F)$ of subsection 
\ref{subsection-a-fall} and resume the notation from there. We introduce the set of diagonal matrices 
\[ D_0 := \{ e^{\underline t}\in M_q(\mathbb R): t= (t_1, \ldots t_q)\in \mathbb R^q \text{ with }\, t_1 \geq \ldots \geq t_q\}.\]
Then $G/\!/K \cong D_0$, and 
 a spherical function $\psi$ of $G/K$ may be characterized as a
continuous function  on $D_0$ satisfying the product formula
\begin{equation}\label{prod-formel-2}
\psi(a)\cdot\psi (b)=\,
\int_{U(q,\mathbb F)} \psi(\sigma_{sing}(akb))dk, \quad a,b \in D_0.
\end{equation}
Comparison with Lemma \ref{limit-product-formula} gives

\begin{theorem}\label{ident-ol}
A continuous  $K_\infty$-biinvariant function $\phi: G_\infty\to \mathbb C$ is an Olshanski spherical 
function if and only if the function  $\tilde\phi:  D\to \mathbb C$ with
 $\phi(a_t^\infty)=\tilde\phi({\rm cosh}\, t)$ for $t\in C_B$ is the restriction to $ D$ of a
spherical function $\psi$ of $G/K$. Each spherical function $\psi$ of $G/K$ is uniquely determined by its 
restriction to $ D$, and the Olshanski spherical functions therefore correspond in a bijective way to the spherical functions 
of $G/K.$ 
\end{theorem}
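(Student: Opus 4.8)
The plan is to deduce the theorem by directly matching the two product-formula characterizations already at hand: by Lemma~\ref{limit-product-formula}, Olshanski spherical functions correspond to continuous solutions $\tilde\phi$ of \eqref{prod-formel-1} on $D$, while spherical functions of $G/K$ are the continuous solutions $\psi$ of \eqref{prod-formel-2} on $D_0$. The geometric point is that $D$ sits inside $D_0$ as the set of positive diagonal matrices all of whose entries are $\geq 1$, and that $D_0$ is obtained from $D$ by central scaling: every $a=e^{\underline t}\in D_0$ satisfies $ca\in D$ for all large $c>0$, since scaling by $cI_q$ shifts every $t_i$ by $\log c$ and preserves their order. I would exploit two special features of the singular-value convolution. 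First, a scalar factor makes the formula deterministic: for $k\in U(q,\mathbb F)$ unitary and $y\in D_0$ positive diagonal one has $\sigma_{sing}(cI_q\,k\,y)=c\,\sigma_{sing}(ky)=c\,\sigma_{sing}(y)=cy$, so the integral in \eqref{prod-formel-2} collapses and yields the scaling relation $\psi(cy)=c^m\psi(y)$, where $m$ is the central exponent fixed by $c\mapsto\psi(cI_q)=c^m$; the identical computation gives $\tilde\phi(cy)=c^m\tilde\phi(y)$ for $c\geq 1$, $y\in D$. Second, for $a,b\in D$ one has $\|akb\,x\|\geq\sigma_{\min}(a)\sigma_{\min}(b)\,\|x\|\geq\|x\|$, so all singular values of $akb$ are $\geq 1$ and hence $\sigma_{sing}(akb)\in D$; thus $D$ is closed under the convolution and \eqref{prod-formel-1} is exactly \eqref{prod-formel-2} restricted to $a,b\in D$.

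From these two facts the easy direction is immediate: if $\psi$ is a spherical function of $G/K$, then since $\sigma_{sing}(akb)\in D$ for $a,b\in D$, the restriction $\tilde\phi:=\psi|_D$ satisfies \eqref{prod-formel-1} and so defines an Olshanski spherical function. For the converse (surjectivity) I would start from a continuous solution $\tilde\phi$ of \eqref{prod-formel-1} on $D$ (normalized so that $\tilde\phi(I_q)=1$, which the product formula forces for a nonzero solution and which matches $\psi(e)=1$), read off the central exponent $m$ from the continuous multiplicative map $c\mapsto\tilde\phi(cI_q)=c^m$ on $[1,\infty)$, and set
\[
\psi(a):=c^{-m}\,\tilde\phi(ca)\qquad(a\in D_0)
\]
for any $c>0$ large enough that $ca\in D$. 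Well-definedness follows from $\tilde\phi(c'y)=(c'/c)^m\tilde\phi(cy)$ for $c'\geq c$, continuity from choosing $c$ locally constant, and clearly $\psi|_D=\tilde\phi$. To verify \eqref{prod-formel-2} for $a,b\in D_0$ I would scale both into $D$ by a common large $c$, use $\sigma_{sing}(ca\,k\,cb)=c^2\sigma_{sing}(akb)$ together with the product formula for $\tilde\phi$ on $D$ and the defining relation $\tilde\phi(c^2\sigma_{sing}(akb))=c^{2m}\psi(\sigma_{sing}(akb))$; the factors $c^{\pm 2m}$ cancel and leave precisely \eqref{prod-formel-2}. Since the continuous solutions of \eqref{prod-formel-2} are exactly the spherical functions of $G/K$, this exhibits $\psi$ as the required extension.

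Finally, uniqueness (injectivity of the restriction map) follows from the same scaling relation: two spherical functions of $G/K$ agreeing on $D$ share the central exponent $m$ and hence agree everywhere, as any $a\in D_0$ equals $c^{-1}(ca)$ with $ca\in D$. Combining the three steps gives the asserted bijection between spherical functions of $G/K$ and Olshanski spherical functions of $(G_\infty,K_\infty)$, implemented by restriction to $D$. I expect the main obstacle to be the surjectivity step, namely producing the extension and checking that it genuinely satisfies \eqref{prod-formel-2} on all of $D_0$; everything there rests on the closedness of the convolution on $D$ and on the deterministic behaviour of central scalings, so the key is to establish these two structural properties of the singular-value product formula cleanly before carrying out the extension.
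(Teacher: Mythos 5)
Your proposal is correct and follows essentially the same route as the paper: both reduce the theorem via Lemma~\ref{limit-product-formula} to an extension/uniqueness lemma, proved by exploiting that central scalars act deterministically in the singular-value product formula ($\sigma_{sing}(cI_q\,k\,y)=cy$, the paper's identity \eqref{prod-formel-3}) and that $D$ is stable under the convolution. The only difference is presentational: you write the extension as $\psi(a)=c^{-m}\tilde\phi(ca)$ with the central exponent $m$ where the paper writes $\psi(rb)=\psi(rI_q)\psi(b)$ with $b\in D$, and you spell out the well-definedness, stability and verification steps that the paper compresses into ``it is easily checked.''
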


\begin{proof}
The if-part is clear from Lemma \ref{limit-product-formula}. The converse direction
follows from  Lemma \ref{limit-product-formula} together with the following lemma.
\end{proof}

\begin{lemma}
Each continuous function  $\varphi$ on $ D$ which satisfies  product formula (\ref{prod-formel-1})
admits
 a unique extension to a continuous function $\psi$ on $ D_0$ satisfying 
  product formula (\ref{prod-formel-2}).
\end{lemma}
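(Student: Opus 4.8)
The plan is to exploit the fact that, identified as sets of diagonal matrices, $D$ sits inside $D_0$ as exactly those matrices all of whose singular values are $\geq 1$, together with the central role of the positive scalar matrices $cI_q$. First I would record how $\varphi$ transforms under multiplication by $cI_q$ with $c\geq 1$. Since $cI_q$ is central and $k\in U(q,\mathbb F)$ is unitary, one has $\sigma_{sing}(w\,k\,cI_q)=c\,\sigma_{sing}(w)$ independently of $k$, so product formula (\ref{prod-formel-1}) collapses to $\varphi(cw)=\varphi(cI_q)\varphi(w)$ for $w\in D$, $c\geq 1$. Choosing $w=cI_q$ shows that $c\mapsto\varphi(cI_q)$ is a continuous multiplicative function on $[1,\infty)$; hence either $\varphi\equiv 0$ (a trivial case dispatched at the outset) or $\varphi(I_q)=1$ and $\varphi(cI_q)=c^{\,m}$ for a unique $m\in\mathbb C$. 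This exponent $m$ is the only information in $\varphi$ not already read off on $D$, and it is what will force the extension.

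Next I would write down the candidate extension explicitly. For $v=\text{diag}(v_1,\dots,v_q)\in D_0$ with $v_1\geq\dots\geq v_q>0$, the smallest singular value is $v_q$ and $v/v_q\in D$, its entries being $\geq 1$ and decreasing. I set $\psi(v):=v_q^{\,m}\,\varphi(v/v_q)$. Continuity is immediate, as $v\mapsto v_q$ and $v\mapsto v/v_q$ are continuous on $D_0$. Directly from the definition one gets the homogeneity $\psi(cv)=c^{\,m}\psi(v)$ for all $c>0$, and applying the scaling relation of the first step with $c=v_q\geq 1$ to $v\in D$ gives $\psi|_D=\varphi$, so $\psi$ is a genuine extension.

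The core step is to check that $\psi$ satisfies product formula (\ref{prod-formel-2}) on all of $D_0$, and the decisive geometric input is that $D$ is stable under the convolution. Given $a,b\in D_0$, I factor $a=a_q a'$ and $b=b_q b'$ with $a',b'\in D$, so that $a\,k\,b=a_qb_q\,(a'kb')$ and hence $\sigma_{sing}(akb)=a_qb_q\,\sigma_{sing}(a'kb')$. The essential point is $\sigma_{sing}(a'kb')\in D$, i.e. all its singular values are $\geq 1$; this follows from the submultiplicativity $\sigma_{\min}(a'kb')\geq\sigma_{\min}(a')\,\sigma_{\min}(k)\,\sigma_{\min}(b')\geq 1$, valid uniformly for $\mathbb F=\mathbb R,\mathbb C,\mathbb H$. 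Using the homogeneity of $\psi$ and then (\ref{prod-formel-1}) for $\varphi=\psi|_D$ on the pair $a',b'\in D$, I obtain
\begin{align*}
\int_{U(q,\mathbb F)}\psi(\sigma_{sing}(akb))\,dk
&=(a_qb_q)^{\,m}\int_{U(q,\mathbb F)}\varphi(\sigma_{sing}(a'kb'))\,dk\\
&=(a_qb_q)^{\,m}\varphi(a')\varphi(b')=\psi(a)\psi(b).
\end{align*}

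Finally, uniqueness is forced by the same homogeneity argument run on $D_0$: taking $b=cI_q$ in (\ref{prod-formel-2}) shows that any continuous solution extending $\varphi$ must satisfy $\psi(cv)=c^{\,m}\psi(v)$ with $m$ read off from $\varphi(cI_q)=c^{\,m}$ ($c\geq 1$), whence $\psi(v)=v_q^{\,m}\psi(v/v_q)=v_q^{\,m}\varphi(v/v_q)$ is determined by $\varphi$ alone. I expect the only real obstacle to be the stability claim $\sigma_{sing}(a'kb')\in D$, which makes the reduction to (\ref{prod-formel-1}) legitimate; the remainder is bookkeeping around the central scaling, and the degenerate case $\varphi\equiv 0$ is handled separately.
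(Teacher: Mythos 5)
Your proof is correct and follows essentially the same route as the paper's: both factor each element of $D_0$ as a positive scalar matrix times an element of $D$, read off the scalar behaviour of $\varphi$ on $\{cI_q\}$ by taking a central scalar as one factor in product formula (\ref{prod-formel-1}), and transport (\ref{prod-formel-1}) to (\ref{prod-formel-2}) through this factorization. The only difference is that you carry out in detail the existence half which the paper dismisses as ``easily checked'' --- in particular the explicit form $\varphi(cI_q)=c^m$ and the stability estimate $\sigma_{\min}(a'kb')\ge \sigma_{\min}(a')\,\sigma_{\min}(k)\,\sigma_{\min}(b')\ge 1$, which is precisely what legitimizes evaluating $\psi$ by $\varphi$ inside the integral.
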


\begin{proof}
 Assume first that  $\psi: D_0\to \mathbb C$ is such an extension of $\varphi.$  Consider first a 
scalar matrix $a=r I_q$ with $r\geq 1.$ Then $a\in D$
and hence $\psi(a)=\varphi(a).$ Moreover, as  $\psi(a^{-1})=1/\psi(a)$ for $a$ as above, the function
$\psi$ is uniquely determined by $\varphi$ on the set of scalar matrices $\,Z=\{r I_q, \,r >0\}.$  
Now let $a\in D_0$. We then find  $r >0$  and a matrix $b\in D$ 
 such that $a=r b.$ Using
 Product formula (\ref{prod-formel-2}) we obtain
\begin{equation}\label{prod-formel-3}
\psi(r I_q)\psi(b)\,= \,\int_{U_q(\mathbb F)} 
 \psi(\sigma_{sing}(r k b))\,dk\,=\, 
\psi(r b)=\psi(a).
\end{equation}
Therefore, $\psi$ is determined uniquely by $\varphi$. 

Conversely, it is easily checked that for  given $\varphi$, the definition
of 
$\psi$  first on $Z$ as above and then on $D_0$ via (\ref{prod-formel-3}) leads
to a well-defined continuous function $\psi$ on $D_0$ which satisfies the product formula.
\end{proof}

We notice at this point that our proof of Theorem \ref{ident-ol} relies only on the explicit product
  formula (\ref{prod-limes-formel})
 and does not require the results of the preceding sections. On the other hand,
  Corollary
\ref{limtit-sphaerisch} and Theorem \ref{ident-ol} imply the following

\begin{corollary}
All 
 Olshanski spherical functions of the infinite-dimensional Grassmannians $G_\infty/K_\infty$  
appear as limits of the spherical functions of the Grassmannians 
 $G_n/K_n$. 
\end{corollary}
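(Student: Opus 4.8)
The plan is to read the statement through the two structural results already obtained, namely the bijection of Theorem~\ref{ident-ol} and the limit of Corollary~\ref{limtit-sphaerisch}, the only ingredient from outside the excerpt being the classification of the spherical functions of the reductive symmetric space $G/K=GL(q,\mathbb F)/U(q,\mathbb F)$. First I would fix an arbitrary Olshanski spherical function $\phi$ of $(G_\infty,K_\infty)$. By Theorem~\ref{ident-ol} there is a unique spherical function $\psi$ of $G/K$ whose restriction $\tilde\phi=\psi|_D$ satisfies $\phi(a_t^\infty)=\tilde\phi({\rm cosh}\,\underline t)$ for $t\in C_B$. Via the identifications $G_n//K_n\simeq D\simeq G_\infty//K_\infty$, proving that $\phi$ arises as a limit of spherical functions of the $G_n/K_n$ thus amounts to exhibiting the function ${\rm cosh}\,\underline t\mapsto\psi({\rm cosh}\,\underline t)$ on $D$ as a pointwise limit of spherical functions of $G_n/K_n$.

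Next I would invoke the classical completeness of the Harish--Chandra parametrization. Since $G_1/K_1=SL(q,\mathbb F)/SU(q,\mathbb F)$ is a Riemannian symmetric space of the noncompact type, its spherical functions are exactly the functions \eqref{spherical-a}, parameterized by $\lambda\in\mathbb C_0^q$ modulo the Weyl group; combined with the central splitting \eqref{semisimple-auf-reduktiv} established in Subsection~\ref{subsection-a-fall}, this shows that every spherical function of $G/K$ equals $\psi_\mu$ for some $\mu\in\mathbb C^q$ as in \eqref{spherical-a-reduktiv}. I would then write $\mu=\lambda+\rho_A^{geo}$ with $\lambda:=\mu-\rho_A^{geo}\in\mathbb C^q$, which is possible for every $\mu$ because $\rho_A^{geo}$ is a fixed vector, and apply Corollary~\ref{limtit-sphaerisch} to obtain, for each $t\in\mathbb R^q$,
\[
\psi({\rm cosh}\,\underline t)=\psi_{\lambda+\rho_A^{geo}}({\rm cosh}\,\underline t)=\lim_{p\to\infty}\varphi_{\lambda+\rho_{BC}^{geo}}(a_t).
\]

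Finally I would observe that for $p=n+q$ the function $\varphi_{\lambda+\rho_{BC}^{geo}}$ is precisely a spherical function of $\mathcal G_{p,q}(\mathbb F)=G_n/K_n$, and that the limit $p\to\infty$ coincides with $n\to\infty$. Reading the displayed identity through the identification $G_n//K_n\simeq D$, this exhibits $\tilde\phi=\psi|_D$, and hence $\phi$, as the pointwise limit on $D$ of the spherical functions of $G_n/K_n$ indexed by $\lambda+\rho_{BC}^{geo}$. As $\phi$ was an arbitrary Olshanski spherical function, the claim follows.

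I expect the genuine content to lie not in the limit, which Corollary~\ref{limtit-sphaerisch} supplies directly, but in the assertion that the explicit parametrization \eqref{spherical-a-reduktiv} exhausts \emph{all} spherical functions of $G/K$. This rests on the classical Harish--Chandra/Godement description of the solutions of the spherical product formula for the semisimple factor $G_1/K_1$, together with the reduction to that factor via the central character already carried out in Subsection~\ref{subsection-a-fall}; one should check that the scalar exponent appearing there is matched by the component $\langle t-\pi(t),\mu\rangle$ of $\psi_\mu$, as recorded in \eqref{def-m}. Everything else is bookkeeping with the fixed shifts $\rho_A^{geo}$, $\rho_{BC}^{geo}$ and the standard double-coset identifications with $D$.
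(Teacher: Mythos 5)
Your proposal is correct and follows essentially the same route as the paper, which states the corollary as an immediate consequence of Theorem~\ref{ident-ol} and Corollary~\ref{limtit-sphaerisch} without writing out the details. Your elaboration --- in particular the observation that one must know the parametrization \eqref{spherical-a-reduktiv} exhausts \emph{all} spherical functions of $G/K$, which the paper secures in Subsection~\ref{subsection-a-fall} via the Harish--Chandra classification for $G_1/K_1$ and the central-character extension \eqref{semisimple-auf-reduktiv} --- is precisely the intended argument.
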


Let us finally  remark that further
 Olshanski spherical pairs with fixed rank may be treated in  a similar way, for example  pairs related to the 
Cartan motion groups of  Grassmann  manifolds with growing dimension, see \cite{RV2}.


\end{document}